\newtheorem{thm}{Theorem}[section]
\newtheorem{lemma}[thm]{Lemma}
\newtheorem{prop}[thm]{Proposition}
\newtheorem{cor}[thm]{Corollary}
\newcommand{\Z}{\mathbb{Z}}
\newcommand{\Q}{\mathbb{Q}}
\newcommand{\N}{\mathbb{N}}
\newcommand{\cal}{\mathcal}
\newcommand{\ve}{\varepsilon}
\title[On continued fraction partial quotients of square roots of primes]{On continued fraction partial quotients \\ of square roots of primes} 
\author{V\' \i t\v ezslav Kala}
\address{Charles University, Faculty of Mathematics and Physics, Department of Algebra, Sokolov\-sk\' a 83, 18600 Praha~8, Czech Republic}
\email{vitezslav.kala@matfyz.cuni.cz}
\author{Piotr Miska}
\address{Jagiellonian University in Cracow, Faculty of Mathematics and Computer Science, Institute of
Mathematics, {\L}ojasiewicza 6, 30-348 Krak\'ow, Poland}
\email{piotr.miska@uj.edu.pl}
\thanks{
	V.K. was supported by Czech Science Foundation (GA\v CR) grant 21-00420M and Charles University Research Centre program UNCE/SCI/022. 
	P.M. was supported by the grant of the Polish National Science Centre no. UMO-2019/34/E/ST1/00094.}
\begin{document}

\begin{abstract}
    We show that for each positive integer $a$ there exist only finitely many prime numbers $p$ such that $a$ appears an odd number of times in the period of continued fraction of $\sqrt{p}$ or $\sqrt{2p}$. We also prove that if $p$ is a prime number and $D=p$ or $2p$ is such that the length of the period of continued fraction expansion of $\sqrt{D}$ is divisible by $4$, then $1$ appears as a partial quotient in the continued fraction of $\sqrt{D}$. Furthermore, we give an upper bound for the period length of continued fraction expansion of $\sqrt{D}$, where $D$ is a positive non-square, and factorize some family of polynomials with integral coefficients connected with continued fractions of square roots of positive integers. These results answer several questions recently posed by Miska and Ulas \cite{MiUl}.
\end{abstract}

\maketitle

\section{Introduction}

Each irrational real number $x$ can be written in the form of infinite continued fraction
$$
x=a_{0}+\cfrac{1}{a_{1}+\cfrac{1}{a_{2}+\cfrac{1}{a_{3}+\cfrac{1}{\ddots}}}}=[a_{0},a_{1},a_{2},\ldots],
$$
where the partial quotients $a_{0},a_{1},\ldots $ can be recursively computed in the following way
$$
\alpha_0=x,\;a_{0}=\lfloor \alpha_{0}\rfloor,\quad \alpha_{k+1}=\frac{1}{\alpha_{k}-a_{k}},\quad a_{k}=\lfloor \alpha_{k}\rfloor.
$$
It is clear that $a_{0}\in\Z$ and $a_{i}\in\N_{+}$ for $i\in\N_{+}$ (we denote by $\Z$ the set of all integers, and by $\N_0$ and $\N_{+}$ the subsets of non-negative and positive integers, respectively). The number $a_{i}$ is called the $i$-th partial quotient (or $i$-th coefficient) of the continued fraction for $x$.

The continued fraction $[a_{0},a_{1},a_{2},\ldots]$ is eventually periodic if there exists $l\in\N_{+}$ such that for all sufficiently large $n$ we have $a_{n}=a_{n+l}$; the smallest such $l$ is the period of the continued fraction. Then we write $[a_{0},a_{1},a_{2},\ldots,a_{n-1},\overline{a_{n},\ldots,a_{n-1+l}}]$. A famous result from the theory of continued fractions, Lagrange's theorem, states that the irrational number $x$ has eventually periodic continued fraction if and only if $x$ is a quadratic irrational. In particular, for any given non-square $D\in\N_{+}$ the continued fraction of $\sqrt{D}$ is eventually periodic. Moreover, it is well known that $\sqrt{D}=[a_0,\overline{a_1,\ldots,a_l}]$, where $a_0=\lfloor\sqrt{D}\rfloor$, $a_l=2\lfloor\sqrt{D}\rfloor$ and $a_{l-j}=a_j$ for each $j\in\{1,\ldots,l-1\}$. Denote the length $l$ of the period of the continued fraction of $\sqrt{D}$ by $T_D$. 

\medskip

There has been great interest in studying the behaviour of partial quotients of continued fractions. Nevertheless, even in the case of $x$ being any particular non-quadratic algebraic irrationality (even $x=\sqrt[3]{2}$) we do not have information about unboundedness of the partial quotients, nor about infinite appearance of a given positive integer as a partial quotient of continued fraction of $x$. 

Although the problem of description of partial quotients of continued fractions of quadratic irrationalities is well studied (see e.g. \cite[Drittes Kapitel]{Pe}), it is still far from completely understood, as evidenced by recent results \cite{CS, DCS, Lou20, Lou} on periods and partial quotients of square roots of prime and semi-prime numbers. Also, the papers \cite{Fr, Gol, HK, RipTay} are devoted to continued fractions of square roots of positive integers with a given period length, or even all the period given without the last partial quotient $a_l$ (which equals $2a_0$). Continued fractions also motivated some recent results on the distribution of class numbers of real quadratic fields \cite{CF+}.

A probabilistic approach shows that $1$ is the most probable value of partial quotient of continued fraction of a uniformly randomly chosen irrational number $x=[a_0,a_1,a_2,\ldots]$. Gauss conjectured and Kuzmin \cite{Kuz} proved that if $k\to\infty$ then the probability that $a_k=n$ tends to $\log_2\frac{(n+1)^2}{n(n+2)}$. Moreover,  $a_1=1$ with probability $1/2$. 

Inspired by a result of Skałba \cite{Ska}, Miska and Ulas \cite{MiUl} showed by various approaches that there exist infinitely many prime numbers $p$ with a prescribed sequence $(a_1,\ldots, a_k)$ of beginning partial quotient of continued fractions of their square roots. The aim of our paper is to answer some questions posed in \cite{MiUl}.

\medskip

Define for each $i\in\N$ the set $\cal{L}_i$ as the set of prime numbers $p$ for which $1$ appears exactly $i$ times in the period of continued fraction of $\sqrt{p}$. According to numerical computations the authors of \cite{MiUl} asked (\cite[Question 5.4]{MiUl}) whether $\cal{L}_1=\{3\}$, $\cal{L}_3=\{7\}$ and $\cal{L}_{2i+1}=\varnothing$ for each integer $i\geq 2$. 
We answer this question in the affirmative as the special case $a=1$ of the following theorem.

\begin{thm}\label{TA}
Let $a\in\N_+$ and let $p$ a prime number. Then the period of continued fraction of $\sqrt{p}$ contains $a$ as a partial quotient an odd number of times if and only if
\begin{enumerate}
\item $p\equiv 3\pmod{4}$ and $a^2<p<(a+2)^2$ when $a$ is odd; 
\item $\frac{a^2}{4}<p<\frac{(a+2)^2}{4}$ when $a$ is even.
\end{enumerate}
The period of continued fraction of $\sqrt{2p}$ contains $a$ as a partial quotient an odd number of times if and only if $a$ is even and $p\in\left(\frac{a^2}{8},\frac{(a+2)^2}{8}\right)$ or $p\in\left(\frac{a^2}{2},\frac{(a+2)^2}{2}\right)$ and $2\mid T_{2p}$.
\end{thm}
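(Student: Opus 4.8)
\emph{Step 1: reduction to two distinguished partial quotients.}
Write the complete quotients of $\sqrt D$ as $\alpha_k=\frac{P_k+\sqrt D}{Q_k}$, so that $a_k=\lfloor\alpha_k\rfloor$, $P_{k+1}=a_kQ_k-P_k$, $Q_{k-1}Q_k=D-P_k^2$, and recall the standard facts that for $1\le k\le T_D-1$ one has $0<P_k<\sqrt D$, $\sqrt D-P_k<Q_k<\sqrt D+P_k$, and $Q_k\ge 2$ (indeed $Q_k=1$ would give $\alpha_{k+1}=\alpha_1$, hence $T_D\mid k$). In particular $a_k=\lfloor\alpha_k\rfloor\le a_0<2a_0$ for $1\le k\le T_D-1$, so the value $2a_0=a_{T_D}$ does not occur among $a_1,\dots,a_{T_D-1}$. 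Pairing the index $j$ with $T_D-j$ in the palindromic identity $a_j=a_{T_D-j}$, every value occurs an even number of times among $a_1,\dots,a_{T_D-1}$, with the sole exception — when $T_D$ is even — of the middle term $a_{T_D/2}$, which occurs an odd number of times. Hence $a$ occurs an odd number of times in the period of $\sqrt D$ if and only if $a=2\lfloor\sqrt D\rfloor$, or $T_D$ is even and $a=a_{T_D/2}$; and these two possibilities are mutually exclusive, because $a_{T_D/2}\le a_0<2a_0$.

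\emph{Step 2: identifying the middle partial quotient.}
Assume $T_D=2m$ is even, so $m\ge 1$ and $\alpha_m$ is a complete quotient of the purely periodic tail. By the well-known symmetry of the continued fraction of $\sqrt D$ one has $P_m=P_{m+1}$, whence $a_mQ_m=P_m+P_{m+1}=2P_m$; thus $Q_m\mid 2P_m$ and $a_m=2P_m/Q_m$. Substituting $2P_m=a_mQ_m$ into $Q_{m-1}Q_m=D-P_m^2$ gives $4D=Q_m(4Q_{m-1}+a_m^2Q_m)$, so $Q_m\mid 4D$. Now take $D=p$ or $D=2p$ with $p$ an odd prime. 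A short check of the divisors of $4D$ lying in $[2,2\sqrt D)$ shows that $Q_m\in\{2,4,8\}$ or $Q_m=p$; the value $Q_m=p$ is impossible, since it forces $p\mid P_m$, hence $p\le P_m<\sqrt D<p$; and $Q_m\in\{4,8\}$ is impossible too, because it makes $P_m$ even ($4\mid P_m$ if $Q_m=8$), and then reducing $Q_{m-1}Q_m=D-P_m^2$ modulo $4$ (resp. $8$) shows that $p$ would be even. Therefore $Q_m=2$ and $a_m=P_m$, and reducing $2Q_{m-1}=D-P_m^2$ modulo $2$ shows that $P_m$ is odd when $D=p$ and even when $D=2p$. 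Finally, since $\alpha_m$ is reduced, $-1<\frac{P_m-\sqrt D}{2}<0$, that is, $P_m^2<D<(P_m+2)^2$.

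\emph{Step 3: pinning the value, and concluding.}
The intervals $(b^2,(b+2)^2)$ with $b$ running over the positive odd (resp. even) integers are pairwise disjoint, and their union omits exactly the odd (resp. even) perfect squares; since $p$ (resp. $2p$) is not a perfect square, it lies in exactly one of them. Hence, whenever $T_D$ is even, $a_{T_D/2}$ equals the unique odd integer $a$ with $a^2<p<(a+2)^2$ when $D=p$, and the unique even integer $a$ with $a^2<2p<(a+2)^2$, i.e. $\tfrac{a^2}{2}<p<\tfrac{(a+2)^2}{2}$, when $D=2p$. It remains to decide the parity of $T_D$: by the classical criterion $T_D$ is odd exactly when $x^2-Dy^2=-1$ is solvable, which for a prime $p$ happens iff $p=2$ or $p\equiv 1\pmod 4$ (the implication for $p\equiv 1\pmod 4$ being an elementary descent on the fundamental solution of $x^2-py^2=1$); hence $T_p$ is even iff $p\equiv 3\pmod 4$, while for $\sqrt{2p}$ we simply retain the hypothesis $2\mid T_{2p}$. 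Combining this with the elementary equivalences $2\lfloor\sqrt p\rfloor=a\Leftrightarrow a\text{ even and }\tfrac{a^2}{4}<p<\tfrac{(a+2)^2}{4}$ and $2\lfloor\sqrt{2p}\rfloor=a\Leftrightarrow a\text{ even and }\tfrac{a^2}{8}<p<\tfrac{(a+2)^2}{8}$ (the excluded endpoints being ruled out because $p$, $2p$ are not squares), and with the dichotomy from Step 1, yields exactly the two asserted characterizations; in particular the condition for $\sqrt{2p}$ can hold only for even $a$, because both $2\lfloor\sqrt{2p}\rfloor$ and $a_{T_{2p}/2}$ are even.

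\emph{Main obstacle.}
The delicate step is the middle-quotient computation of Step 2: one must use the size bound $Q_m<2\sqrt D$, the divisibility $Q_m\mid 2P_m$, and parity considerations simultaneously to force $Q_m=2$, and then keep track of the parity of $P_m$ — which is precisely what makes the $\sqrt p$ characterization concern odd $a$ and the $\sqrt{2p}$ one even $a$. The disjoint-intervals observation and the negative-Pell input are then routine, but the small case $p=2$ (where $T_2=1$, so $\sqrt 2$ has no middle term, and $\sqrt{2p}$ requires $p\ne 2$ for $2p$ to be a nonsquare) should be inspected directly.
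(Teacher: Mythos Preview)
Your proof is correct and follows essentially the same architecture as the paper's: reduce via palindromicity to the two distinguished partial quotients $a_l=2\lfloor\sqrt D\rfloor$ and (when $T_D$ is even) $a_{T_D/2}$, then show $Q_{T_D/2}=2$ and determine the parity and range of $a_{T_D/2}=P_{T_D/2}$. Your Step~2 reproves the content of the paper's Proposition~\ref{T1}, Proposition~\ref{P1}, and Proposition~\ref{P2} (packaged there as Theorem~\ref{T2}) with only a cosmetic variation in how the divisor list for $Q_m$ is cut down (you use $Q_m<2\sqrt D$, the paper uses $Q_L\mid D-P_L^2\le D-1$). The one genuine technical difference is in establishing $T_p$ even $\Leftrightarrow p\equiv 3\pmod 4$: the paper (Corollary~\ref{C1}) argues directly that no $Q_n\equiv 2\pmod 4$ can occur when $p\equiv 1\pmod 4$, whereas you invoke the negative Pell criterion and the classical descent on the fundamental solution of $x^2-py^2=1$. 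Both routes are standard; yours is slightly more self-contained in that it avoids citing Louboutin's result (the paper's Theorem~\ref{T0}) as an external input.
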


Numerical search in \cite{MiUl} also revealed another phenomenon. Namely, there is no prime $p$ in the set $\cal{L}_0\cap [1,10^7] $ with $T_p$ divisible by $4$. Accordingly, \cite[Question 5.3]{MiUl} asked about the existence of a prime $p$ with $T_p$ divisible by $4$ and without $1$ as a continued fraction partial quotient of $\sqrt{p}$. We shall prove that such a $p$ does not exist.

\begin{thm}\label{TB}
Let $p$ a prime number and $D\in\{p,2p\}$. Assume that $T_D$ is divisible by $4$. Then $1$ appears as a partial quotient in the periodic part of continued fraction of $\sqrt{D}$.
\end{thm}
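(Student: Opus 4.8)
The plan is to work with the standard data of the continued fraction of $\sqrt D$: with $l=T_D$, set $P_0=0$, $Q_0=1$, $a_k=\lfloor(P_k+\sqrt D)/Q_k\rfloor$, $P_{k+1}=a_kQ_k-P_k$, $Q_{k+1}=(D-P_{k+1}^2)/Q_k$, so that $\alpha_k:=(P_k+\sqrt D)/Q_k$ satisfies $\alpha_k=a_k+1/\alpha_{k+1}$. I shall freely use the classical facts that for $k\ge 1$ each $\alpha_k$ is reduced — equivalently $0<P_k<\sqrt D$ and $|Q_k-P_k|<\sqrt D<P_k+Q_k$, whence $Q_k<2\sqrt D$ — that $(P_k,Q_k)$ is periodic of period $l$ with $P_{l+1-k}=P_k$ and $Q_{l-k}=Q_k$, that $P_k+P_{k+1}=a_kQ_k$, and that $Q_k=1$ exactly when $l\mid k$. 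Since $4\mid l$, write $l=2m$ with $m$ even, and put $a_0=\lfloor\sqrt D\rfloor$; then $P_{m+1}=P_m$ and $Q_{m+1}=Q_{m-1}$, so $2P_m=a_mQ_m$ and $Q_mQ_{m-1}=D-P_m^2$.

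The first step would be to show $Q_m=2$. Eliminating $P_m$ from $2P_m=a_mQ_m$ and $Q_mQ_{m-1}=D-P_m^2$ gives $4D=Q_m(a_m^2Q_m+4Q_{m-1})$, so $Q_m\mid 4D$; moreover, if $4\mid Q_m$ then $(Q_m/2)\mid P_m$ (from $2P_m=a_mQ_m$), hence $Q_m\mid P_m^2$, and together with $Q_m\mid D-P_m^2$ this forces $Q_m\mid D$, which is impossible for $D\in\{p,2p\}$. Thus $Q_m$ is a divisor of $4D$ that is neither $1$ (as $0<m<l$) nor a multiple of $4$, so for $D\in\{p,2p\}$ necessarily $Q_m\in\{2,p,2p\}$; here $Q_m=2p$ contradicts $Q_m<2\sqrt D$, and $Q_m=p$ would give $p\mid D-P_m^2$, hence $p\mid P_m$, against $0<P_m<\sqrt D<p$. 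Hence $Q_m=2$, so $a_m=P_m$, and reducedness forces $\sqrt D-2<P_m<\sqrt D$, i.e.\ $P_m\in\{a_0-1,a_0\}$. I would then treat these two possibilities separately.

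If $P_m=a_0-1$, then $Q_{m-1}=(D-(a_0-1)^2)/2=(Q_1+2a_0-1)/2\ge a_0$, where $Q_1=D-a_0^2\ge 1$; equality would force $Q_1=1$, i.e.\ $D=a_0^2+1$ and $l=1$, contradicting $4\mid l$. So $Q_{m-1}\ge a_0+1>\sqrt D$, and since $\alpha_{m-1}>1$ and $P_{m-1}<\sqrt D$ this gives $1<\alpha_{m-1}<2\sqrt D/Q_{m-1}<2$, so $a_{m-1}=1$; as $1\le m-1\le l-1$ this case is done. If $P_m=a_0$, then $\alpha_m=(a_0+\sqrt D)/2$, and since $\alpha_1=1/(\sqrt D-a_0)=(a_0+\sqrt D)/Q_1$,
$$\alpha_{m+1}=\frac{1}{\alpha_m-a_0}=\frac{2(a_0+\sqrt D)}{D-a_0^2}=2\alpha_1.$$
From this I would prove by induction on $t$ that if $a_2,a_4,\dots,a_{2t}$ are all even then $\alpha_{m+1+2t}=2\alpha_{1+2t}$: comparing integer parts in $2\alpha_{1+2t}=2a_{1+2t}+2/\alpha_{2t+2}$ (using $\alpha_{2t+2}>a_{2t+2}\ge 2$) yields $a_{m+1+2t}=2a_{1+2t}$ and $\alpha_{m+2t+2}=\alpha_{2t+2}/2$, and then, $a_{2t+2}$ being even, $\alpha_{m+2t+3}=2\alpha_{2t+3}$. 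Now $a_2,a_4,\dots,a_{2m}$ cannot all be even: otherwise the case $t=m/2$, together with periodicity and $\alpha_{m+1}=2\alpha_1$, gives $\alpha_1=\alpha_{2m+1}=2\alpha_{m+1}=4\alpha_1$, which is absurd. Since $a_{2m}=2a_0$ is even, there is a least $j_0\in\{1,\dots,m-1\}$ with $a_{2j_0}$ odd, and then $\alpha_{m+2j_0-1}=2\alpha_{2j_0-1}$. If $a_{2j_0}=1$ we have found the desired partial quotient. If $a_{2j_0}=2b+1\ge 3$, then from $\alpha_{2j_0}>3$ one obtains $a_{m+2j_0-1}=2a_{2j_0-1}$, $\alpha_{m+2j_0}=\alpha_{2j_0}/2$, $a_{m+2j_0}=b$, and
$$\alpha_{m+2j_0+1}=\frac{2}{\alpha_{2j_0}-2b}=\frac{2}{1+1/\alpha_{2j_0+1}}\in(1,2),$$
so $a_{m+2j_0+1}=1$; since $m$ is even, $m+2j_0+1\not\equiv 0\pmod{2m}$, so this is a partial quotient in the period, which finishes the argument.

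I expect the case $P_m=a_0$ to be the main obstacle. The naive hope that a $1$ appears next to the central partial quotient $a_m$ holds only when $P_m=a_0-1$; when $P_m=a_0$ one is forced to follow the continued fraction of $2\alpha_1$ all the way around the period, and the decisive point is the ``$\alpha_1=4\alpha_1$'' obstruction, which guarantees that not all even-indexed partial quotients are even and hence that the halving pattern must break and produce a $1$.
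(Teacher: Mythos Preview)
Your proof is correct. The overall architecture agrees with the paper's: both establish $Q_m=2$ (your direct argument is essentially the paper's Proposition~2.3), both split on $P_m\in\{a_0-1,a_0\}$, and the case $P_m=a_0-1$ is handled the same way (showing that $a_{m-1}=1$, equivalently $a_{m+1}=1$).

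The case $P_m=a_0$, however, you handle by a genuinely different and arguably more elementary route. The paper expands $D=[a,\overline{a_1,\dots,a_{L-1},a,a_{L-1},\dots,a_1,2a}]^2$ via the continuant polynomials $q_n$ (Lemma~4.1), deduces the identity $q_{L-2}(a_2,\dots,a_{L-1})=2q_{L-2}(a_1,\dots,a_{L-2})$ from integrality of $D$ (Corollary~4.2, classical in Perron), and then proves by an induction on $m$ (Lemma~4.3) that this identity forces some $a_j=1$. You instead work directly with the complete quotients: from $\alpha_{m+1}=2\alpha_1$ you propagate the relation $\alpha_{m+1+2t}=2\alpha_{1+2t}$ as long as the even-indexed partial quotients stay even, obtain a contradiction $\alpha_1=4\alpha_1$ from periodicity if this never breaks, and then show that at the first odd $a_{2j_0}$ either $a_{2j_0}=1$ or the halving step immediately produces $a_{m+2j_0+1}=1$. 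This avoids the continuant machinery entirely and is very transparent dynamically. What the paper's approach buys in exchange is the explicit algebraic identity $T=2S$, which is of independent interest, and a slightly sharper localization of where the $1$ must occur (their Theorem~4.4).
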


Another question \cite[Question 5.1]{MiUl} based on numerical experiments concerned an upper bound for $T_p$. Specifically, denoting by $p_m$ the $m$th prime number,  it asked for the value of $\limsup_{m\to\infty}\frac{T_{p_m}}{\sqrt{m}\log m}$ and if this quotient is less than $1$ for any $m\geq 5$. We partially answer this question thanks to the following theorem.

\begin{thm}\label{th:period length}
Let $D>1$ a squarefree integer, $\sqrt{D}=[a_0,\overline{a_1,\ldots,a_l}]$, and $\chi$ the corresponding quadratic character.
Then $$T_D=l<\frac 4{\log 2}\sqrt D \cdot L(1,\chi)\ll
\begin{cases}
  \sqrt D \log D  &  \text{ unconditionally}, \\
  \sqrt D \log\log D  &  \text{ assuming GRH}.
\end{cases}$$
\end{thm}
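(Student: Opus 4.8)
The plan is to connect the period length $T_D = l$ of the continued fraction of $\sqrt{D}$ to the regulator (equivalently, the size of the fundamental solution) of the Pell equation for $D$, and then to bound that quantity via the analytic class number formula together with standard upper bounds for $L(1,\chi)$.

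First I would recall the classical correspondence between continued fraction convergents and Pell's equation: if $p_k/q_k$ denotes the $k$th convergent of $\sqrt{D}$, then $(p_{l-1}, q_{l-1})$ (for the period length $l$) gives the fundamental solution of $x^2 - Dy^2 = \pm 1$, and hence the fundamental unit $\varepsilon_D$ of the relevant order satisfies $\varepsilon_D = p_{l-1} + q_{l-1}\sqrt{D}$ (or its square, depending on the sign and on whether $D \equiv 1 \pmod 4$; since $D$ is squarefree one has to be slightly careful about the order $\Z[\sqrt D]$ versus the maximal order, but this only affects constants). The key quantitative input is a \emph{lower} bound on $\varepsilon_D$ in terms of $l$: each step of the continued fraction algorithm multiplies the denominators by at least a factor related to $a_k \ge 1$, and a cleaner route is to use that $\alpha_k \varepsilon_D$-type products over one period telescope, giving $\varepsilon_D = \prod_{k=1}^{l}\alpha_k$ where each complete quotient $\alpha_k > 1$; combined with the fact that the product of any two consecutive complete quotients exceeds $2$ (since $\alpha_k \alpha_{k+1} = \alpha_k(a_k + 1/\alpha_{k+1})\cdot(\ldots) $, or more simply $\alpha_{k+1} = 1/(\alpha_k - a_k) > 1$ and $a_k \ge 1$ forces $\alpha_k > 2$ for the "even-indexed" ones), one gets $\varepsilon_D \ge 2^{l/2}$, i.e. $l \le \frac{2}{\log 2}\log \varepsilon_D$. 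Thus $T_D = l \le \frac{2}{\log 2}\log\varepsilon_D$.

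Next I would invoke the analytic class number formula for the real quadratic field $\Q(\sqrt D)$ (or the order $\Z[\sqrt D]$, adjusting by the conductor): $L(1,\chi) = \dfrac{2 h_D \log \varepsilon_D}{\sqrt{d_D}}$, where $d_D$ is the discriminant and $h_D$ the class number, so that $\log\varepsilon_D = \dfrac{\sqrt{d_D}\, L(1,\chi)}{2 h_D} \le \dfrac{\sqrt{d_D}}{2} L(1,\chi) \le 2\sqrt D \cdot L(1,\chi)$ using $h_D \ge 1$ and $d_D \le 4D$. Substituting into the bound from the previous step yields
$$
T_D = l \le \frac{2}{\log 2}\log\varepsilon_D \le \frac{4}{\log 2}\sqrt D \cdot L(1,\chi),
$$
which is exactly the claimed inequality. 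Finally, the unconditional estimate $L(1,\chi) \ll \log D$ is the classical elementary bound (partial summation / Pólya–Vinogradov), and under GRH one has the Littlewood-type bound $L(1,\chi) \ll \log\log D$; plugging these in gives the two displayed consequences.

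The main obstacle I anticipate is pinning down the exact constant in the step $l \le \frac{2}{\log 2}\log\varepsilon_D$ while correctly handling the distinction between the order $\Z[\sqrt D]$ and the maximal order of $\Q(\sqrt D)$ (the conductor is $1$ or $2$), and between the $+1$ and $-1$ Pell equations — a factor of $2$ lost here would break the clean constant $\frac{4}{\log 2}$. The cleanest way to avoid trouble is to work directly with the product formula $\varepsilon_{\Z[\sqrt D]} = \prod_{k=1}^{l}\alpha_k$ for the fundamental unit of $\Z[\sqrt D]$ itself (so no conductor issue), prove $\prod \alpha_k \ge 2^{l/2}$ by pairing consecutive complete quotients, and then use $L(1,\chi)$ for the \emph{order} (Dirichlet character mod $4D$ or $D$) together with $h \ge 1$; the discriminant bound $\le 4D$ then delivers the stated constant without slack to spare.
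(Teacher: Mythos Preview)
Your proposal is correct and follows essentially the same route as the paper: bound $l$ by showing the Pell unit $p_{l-1}+q_{l-1}\sqrt D$ exceeds $2^{l/2}$, then feed this into the class number formula with $h\geq 1$ and the standard bounds on $L(1,\chi)$. The only cosmetic difference is in how the growth estimate $\varepsilon\geq 2^{l/2}$ is obtained: the paper works with $\alpha_i=p_i+q_i\sqrt D$ and uses the convergent recurrence twice to get $\alpha_{i+1}=(a_{i+1}a_i+1)\alpha_{i-1}+a_{i+1}\alpha_{i-2}>2\alpha_{i-1}$, whereas you pair consecutive complete quotients via $\omega_k\omega_{k+1}=a_k\omega_{k+1}+1>2$ inside the product formula for the unit; both give the same $2^{l/2}$ and are standard.
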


Such a theorem is not new; similar results were obtained, e.g., \cite{Pod,Poh,Sa} (even with somewhat better constants than ours). However, we include the proof for the sake of completeness, and then discuss what this implies for \cite[Question 5.1]{MiUl}.

Let us briefly describe the content of the paper. Section \ref{sec:2} contains some basic and auxiliary results that will be used in the sequel. Sections \ref{sec:3}, \ref{sec:4}, and \ref{sec:5} give the proofs of the results announced above with some additional remarks and consequences; in particular, at the end of Section \ref{sec:5} we specialize Theorem \ref{th:period length} to the case $D=p_m$ and discuss the answer to \cite[Question 5.1]{MiUl}. Section \ref{factorpoly} is devoted to the factorization of some quadratic polynomials naturally connected with continued fractions of square roots of positive integers. Our result is complementary to those ones obtained in \cite{Fr, MiUl, RipTay}.

Some of the proof techniques and lemmas that we use are known (and appear, e.g., in Perron's classical book \cite{Pe}). Nevertheless, we often give the proofs of such results for convenience of the readers and for the sake of completeness of our paper.

\section{Preliminaries}\label{sec:2}

Let us start by recalling the following result that is a part of Theorem 1 in \cite{Lou}.

\begin{thm}[{\cite[Theorem 1]{Lou}}]\label{T0}
Let $p\equiv 3\pmod{4}$ a prime number and $D\in\{p,2p\}$. If $T_D=l$ and $\sqrt{D}=[a_0,\overline{a_1,\ldots,a_l}]$, then $T_D=2L$ for some $L\in\N_+$ and $a_L$ is an integer from the set $\{\lfloor\sqrt{p}\rfloor-1,\lfloor\sqrt{p}\rfloor\}$ with the same parity as $D$. Moreover, $L$ is even if and only if $p\equiv 7\pmod{8}$.
\end{thm}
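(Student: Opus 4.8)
The plan is to run the classical machinery of complete quotients of $\sqrt D$. Write $\alpha_k=\tfrac{P_k+\sqrt D}{Q_k}$ with $P_0=0$, $Q_0=1$ and the usual recurrences $P_{k+1}=a_kQ_k-P_k$, $Q_{k+1}=\tfrac{D-P_{k+1}^2}{Q_k}$, and recall the following standard facts (all in \cite{Pe}): every $\alpha_k$ with $k\ge1$ is reduced, so $0<P_k<\sqrt D$ and $\sqrt D-P_k<Q_k<\sqrt D+P_k$; one has $Q_k=1$ exactly when $l\mid k$; the period is palindromic, $a_j=a_{l-j}$ for $1\le j\le l-1$, which translates into $P_j=P_{l+1-j}$ and $Q_j=Q_{l-j}$; and the convergents $p_k/q_k=[a_0,a_1,\ldots,a_k]$ satisfy $p_{k-1}^2-Dq_{k-1}^2=(-1)^kQ_k$.

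First I would check that $l$ is even. If $l$ were odd, then $p_{l-1}^2-Dq_{l-1}^2=(-1)^lQ_l=-1$, and reducing modulo $p$ gives $p_{l-1}^2\equiv-1\pmod p$ (whether $D=p$ or $D=2p$), which is impossible since $-1$ is not a quadratic residue modulo a prime $p\equiv3\pmod4$. Hence $T_D=l=2L$ with $L\in\N_+$; since $0<L<l$ we have $Q_L\ge2$.

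The core of the argument is to show $Q_L=2$. From the palindromic symmetry $P_L=P_{L+1}$, whence $a_LQ_L=P_L+P_{L+1}=2P_L$ and $Q_{L-1}=Q_{L+1}$. Let $d=\gcd(P_L,Q_L)$. Since $Q_{L-1}Q_L=D-P_L^2$, $d$ divides $D-P_L^2$ and $d\mid P_L$, so $d\mid D$; as $d\le P_L<\sqrt D$ and $D\in\{p,2p\}$ is squarefree, necessarily $d\in\{1,2\}$. Writing $Q_L=dQ'$, $P_L=dP'$ with $\gcd(Q',P')=1$, the divisibility $Q_L\mid2P_L$ forces $Q'\mid2$, hence $Q_L\mid2d\mid4$. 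Finally $Q_L=4$ is excluded: it would give $P_L=2a_L$ and then $Q_{L-1}=\tfrac{D-4a_L^2}{4}$, not an integer because $D$ is odd or $\equiv2\pmod4$. So $Q_L=2$, and therefore $P_L=a_L$. Now $\alpha_L=\tfrac{a_L+\sqrt D}{2}$ is reduced, so $-1<\tfrac{a_L-\sqrt D}{2}<0$, i.e. $\sqrt D-2<a_L<\sqrt D$; since $\sqrt D\notin\Z$ this forces $a_L\in\{\lfloor\sqrt D\rfloor-1,\lfloor\sqrt D\rfloor\}$, and $Q_{L-1}=\tfrac{D-a_L^2}{2}\in\N_+$ forces $a_L\equiv D\pmod2$. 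For the parity of $L$, put $k=L$ in $p_{k-1}^2-Dq_{k-1}^2=(-1)^kQ_k$ to get $p_{L-1}^2-Dq_{L-1}^2=2(-1)^L$ with $\gcd(p_{L-1},q_{L-1})=1$; a short reduction modulo $8$, done separately for $D=p$ and $D=2p$ and using $p\equiv3\pmod4$, pins down the parities ($p_{L-1},q_{L-1}$ both odd when $D=p$; $p_{L-1}$ even, $q_{L-1}$ odd when $D=2p$) and yields $p\equiv7\pmod8$ when $L$ is even and $p\equiv3\pmod8$ when $L$ is odd, which is exactly the claimed equivalence.

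I expect the step isolating $Q_L=2$ to be the main obstacle: it needs one to combine the $\gcd$ divisibility, the squarefreeness of $D$, the bound $Q_L<2\sqrt D$, the integrality of $Q_{L-1}$, and the fact that $Q_L\ne1$ because $0<L<l$; once $Q_L=2$ is in hand, the value of $a_L$, its parity, and the mod-$8$ bookkeeping for the parity of $L$ are routine.
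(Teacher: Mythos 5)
Your proof is correct, and it is worth noting that the paper does not actually prove this theorem: it is imported from Louboutin's paper, and the paper only re-derives most of its content later as Lemma \ref{L0}, Propositions \ref{T1}, \ref{P1}, \ref{P2} and Theorem \ref{T2}. Your argument parallels that machinery closely in its core: you get $P_{L+1}=P_L$ from the palindromic symmetry $P_j=P_{l+1-j}$ where the paper invokes Galois' theorem on reversed periods ($\omega_L'\omega_{L+1}=-1$); and you pin down $Q_L=2$ via $\gcd(P_L,Q_L)\mid D$, squarefreeness, and $Q_L\mid 2P_L$, where the paper instead uses $4Q_LQ_{L+1}=4D-a_L^2Q_L^2$ to get $Q_L\mid 4D$ and then rules out $Q_L\in\{1,4,p\}$ -- essentially the same elimination by different bookkeeping. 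The genuine added value of your write-up is that you also prove the two pieces the paper leaves entirely to \cite{Lou}: the evenness of $l$ (via $p_{l-1}^2-Dq_{l-1}^2=(-1)^l$ and $-1$ being a non-residue mod $p\equiv 3\pmod 4$) and the equivalence ``$L$ even $\iff p\equiv 7\pmod 8$'' (via $p_{L-1}^2-Dq_{L-1}^2=2(-1)^L$ and congruences; note that in the case $D=2p$ you should first divide by $2$, or work modulo $16$, since modulo $8$ alone the two residues $3$ and $7$ of $p$ are not separated). Both arguments are sound and consistent with the discussion of $x^2-py^2=\pm2$ in Section \ref{factorpoly}. One further point in your favour: for $D=2p$ the conclusion must read $a_L\in\{\lfloor\sqrt D\rfloor-1,\lfloor\sqrt D\rfloor\}$, which is what you prove and what the paper's own Theorem \ref{T2} states; the $\lfloor\sqrt p\rfloor$ appearing in the statement of Theorem \ref{T0} is evidently a slip (e.g.\ $\sqrt 6=[2,\overline{2,4}]$ has $a_L=2=\lfloor\sqrt 6\rfloor\neq\lfloor\sqrt 3\rfloor$).
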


For the next result we need some preparation. Let $D\in\N_+$ not a perfect square and $T_D=l$. Write $\omega_0=\sqrt{D}=[a_0,\overline{a_1,\ldots,a_l}]$ and $\omega_k=[\overline{a_k,\ldots,a_l,a_1,\ldots,a_{k-1}}]$ for $k\in\N_+$. One can easily show by induction on $k$ that $\omega_k=\frac{\sqrt{D}+P_k}{Q_k}$ for some $P_k,Q_k\in\N_0$ such that $P_k^2<D$ and $Q_k\mid D-P_k^2$. If additionally $k>0$, then $P_k,Q_k\in\N_+$. Moreover, we have recurrence relations
\begin{equation}\label{1}
    P_{k+1}=a_kQ_k-P_k,
\end{equation}
\begin{equation}\label{2}
    Q_{k+1}=\frac{D-P_{k+1}^2}{Q_k}=\frac{D-P_k^2}{Q_k}+2a_kP_k-a_k^2Q_k
\end{equation}
(for these and other properties, see \cite[\S 23--25]{Pe}).

\begin{lemma}\label{L0}
    Let $D\in\N_+$ not a perfect square. With the above notation, if $T_D=2L$ for some $L\in\N_+$, then 
    \begin{equation}\label{2.1}
    P_{L+1}=P_L
    \end{equation}
    and
    \begin{equation}\label{2.2}
    4Q_LQ_{L+1}=4D-a_L^2Q_L^2.
    \end{equation}
\end{lemma}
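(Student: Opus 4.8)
The plan is to deduce both identities from the classical palindromic symmetry of the period of $\sqrt D$. Since $a_{l-j}=a_j$ for $1\le j\le l-1$ while $a_l=2a_0$, reversing a cyclic block of the period reproduces a cyclic shift of it. Precisely, fix $k$ with $1\le k\le l-1$ and consider the purely periodic complete quotient $\omega_{k+1}=[\overline{a_{k+1},a_{k+2},\ldots,a_l,a_1,\ldots,a_k}]$. Being purely periodic it is reduced, so by Galois' theorem on reversed periodic continued fractions (see, e.g., \cite{Pe}),
$$-\frac1{\omega_{k+1}'}=[\,\overline{a_k,a_{k-1},\ldots,a_1,a_l,a_{l-1},\ldots,a_{k+1}}\,],$$
where $\omega_{k+1}'=\frac{P_{k+1}-\sqrt D}{Q_{k+1}}$ is the conjugate of $\omega_{k+1}$. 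On one hand, using $Q_kQ_{k+1}=D-P_{k+1}^2$ (which is \eqref{2}) the left-hand side rewrites as $\frac{Q_{k+1}}{\sqrt D-P_{k+1}}=\frac{\sqrt D+P_{k+1}}{Q_k}$. On the other hand, replacing each entry $a_j$ of the reversed block by $a_{l-j}$ turns $a_k,a_{k-1},\ldots,a_1,a_l,a_{l-1},\ldots,a_{k+1}$ into $a_{l-k},a_{l-k+1},\ldots,a_{l-1},a_l,a_1,\ldots,a_{l-k-1}$, i.e.\ into the period block based at index $l-k$; hence the right-hand side equals $\omega_{l-k}=\frac{\sqrt D+P_{l-k}}{Q_{l-k}}$. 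Comparing the coefficients of $1$ and of $\sqrt D$ (legitimate because $\sqrt D$ is irrational) yields
$$P_{k+1}=P_{l-k}\quad\text{and}\quad Q_k=Q_{l-k}\qquad\text{for }1\le k\le l-1.$$

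Now specialize to $l=2L$. Taking $k=L$ in the first relation gives $P_{L+1}=P_{2L-L}=P_L$, which is \eqref{2.1}. For \eqref{2.2}, feed \eqref{2.1} into the recurrence \eqref{1} at index $k=L$, namely $P_{L+1}=a_LQ_L-P_L$; this forces $2P_L=a_LQ_L$. Finally, \eqref{2} at index $k=L$ reads $Q_LQ_{L+1}=D-P_{L+1}^2=D-P_L^2$, so multiplying by $4$ and substituting $2P_L=a_LQ_L$ we get $4Q_LQ_{L+1}=4D-(2P_L)^2=4D-a_L^2Q_L^2$, which is \eqref{2.2}.

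The only genuinely non-routine step is the reversal symmetry $P_{k+1}=P_{l-k}$, $Q_k=Q_{l-k}$; once it is available, \eqref{2.1} is immediate and \eqref{2.2} is a one-line manipulation of \eqref{1} and \eqref{2}. If one prefers not to quote Galois' theorem, the same symmetry can be obtained directly by running the continued-fraction algorithm forward from $\omega_1$ and comparing it with the algorithm applied to $\omega_l$ read in reverse, or it can simply be cited from \cite[\S 23--25]{Pe}; in a careful write-up this is the point I would state and justify with the most care.
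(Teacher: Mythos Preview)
Your proof is correct and follows essentially the same approach as the paper: both combine the palindromic symmetry of the period with Galois' reversal theorem, then extract $2P_L=a_LQ_L$ from \eqref{1} and feed it into \eqref{2}. The only cosmetic difference is that you first derive the general relations $P_{k+1}=P_{l-k}$, $Q_k=Q_{l-k}$ and then specialize to $k=L$, whereas the paper observes directly that the period of $\omega_{L+1}$ is the reverse of that of $\omega_L$ and applies Galois' theorem in the form $\omega_L'\omega_{L+1}=-1$.
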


\begin{proof}
Consider
$$\omega_L=[\overline{a_L,a_{L-1}\ldots,a_1,a_l,a_1,\ldots,a_{L-1}}]$$
and
$$\omega_{L+1}=[\overline{a_{L-1}\ldots,a_1,a_l,a_1,\ldots,a_{L-1},a_L}].$$
Since the period of continued fraction of $\omega_{L+1}$ is the reverse of the period of continued fraction of $\omega_L$, we know from Galois theorem that $\omega_L'\omega_{L+1}=-1$, where $\omega_L'=\frac{-\sqrt{D}+P_L}{Q_L}$ denotes the algebraic conjugate of $\omega_L$. Thus, we have 
$$\frac{P_L-\sqrt{D}}{Q_L}\frac{P_{L+1}+\sqrt{D}}{Q_{L+1}}=-1,$$ 
equivalently 
$$(D-P_LP_{L+1})+(P_{L+1}-P_L)\sqrt{D}=Q_LQ_{L+1}.$$ 
This means that $P_{L+1}=P_L$. Hence, from \eqref{1} we have $2P_L=a_LQ_L$. Applying this to \eqref{2} with $k=L$, we get \eqref{2.2}.
\end{proof}

Let us now state the following result that is a part of the proof of \cite[Satz 21, page 108]{Pe}. However, we include its proof for readers' convenience.

\begin{prop}\label{T1}
Let $p$ a prime number and $D\in\{p,2p\}$. With the above notation, if $T_D=2L$ for some $L\in\N_+$, then $Q_L=2$.
\end{prop}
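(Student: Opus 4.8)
The plan is to feed Lemma~\ref{L0} into the standard divisibility relations for $P_L$ and $Q_L$ and then run a short case analysis on the possible values of $Q_L$. A preliminary reduction lets me assume $p$ is odd: if $D=p=2$ then $\sqrt{2}=[1,\overline{2}]$ has period $1$, so no such $L$ exists, while $D=2p$ with $p=2$ is a perfect square and is excluded from the setup. Since $T_D=2L$ we have $l=2L$, hence $1\le L\le l-1$. By Lemma~\ref{L0} we have $P_{L+1}=P_L$, and substituting this into \eqref{1} yields the identity $2P_L=a_LQ_L$, which I will use repeatedly.

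The core step is to pin $Q_L$ down from a divisibility constraint. Equation \eqref{2.2} shows at once that $Q_L\mid 4D$, since $Q_L$ divides the left-hand side and the summand $a_L^2Q_L^2$. As $4D\in\{4p,8p\}$ with $p$ an odd prime, I can write $Q_L=2^{a}p^{b}$ with $b\le 1$ and $a\le 3$. If $b=1$, then $p\mid Q_L\mid 2P_L$, so $p\mid P_L$ (because $p$ is odd), which forces $P_L\ge p$ and hence $p^2\le P_L^2<D\le 2p$, impossible; thus $Q_L=2^{a}$. If $4\mid Q_L$, then $P_L=a_L\cdot Q_L/2$ is divisible by $Q_L/2$, so $Q_L\mid P_L^2$, and together with $Q_L\mid D-P_L^2$ this gives $Q_L\mid D$; combined with $4\mid Q_L$ we would get $4\mid D$, impossible since $D$ is either odd or $\equiv 2\pmod{4}$. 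Hence $Q_L\in\{1,2\}$.

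It remains to exclude $Q_L=1$, which, unlike the larger values, is not ruled out by divisibility alone; I expect this to be the only genuinely non-routine point. If $Q_L=1$, then $\omega_L=\sqrt{D}+P_L$, so $a_L=\lfloor\omega_L\rfloor=a_0+P_L$; together with $2P_L=a_LQ_L=a_L$ this forces $P_L=a_0$. Then $P_{L+1}=P_L=a_0$ and, by \eqref{2}, $Q_{L+1}=D-a_0^2=Q_1$, so $\omega_{L+1}=\frac{\sqrt{D}+a_0}{D-a_0^2}=\omega_1$. Since the sequence $(\omega_k)_{k\ge 1}$ is purely periodic with minimal period $l$, this forces $l\mid L$, contradicting $1\le L\le l-1$. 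Therefore $Q_L=2$. (Alternatively, one can invoke the classical fact that for $\sqrt{D}$ one has $Q_k=1$ precisely when $l\mid k$, which disposes of the case $Q_L=1$ at once.)
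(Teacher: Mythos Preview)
Your proof is correct and follows essentially the same route as the paper's: both use Lemma~\ref{L0} to get $Q_L\mid 4D$, then exclude $4\mid Q_L$, $p\mid Q_L$, and $Q_L=1$ by short arguments. The only cosmetic differences are that you rule out $p\mid Q_L$ via $Q_L\mid 2P_L$ (the paper uses $Q_L\mid D-P_L^2$) and you rule out $4\mid Q_L$ via $Q_L\mid P_L^2$ combined with $Q_L\mid D-P_L^2$ (the paper reads $4\mid D$ directly off \eqref{2.2}).
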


\begin{proof}
Note that $T_2=1$ and $2\cdot 2$ is a perfect square, and so we may assume that $p$ is an odd prime number. 

By Lemma \ref{L0} we have $Q_L\mid 4D$. Let us first observe that $4\nmid Q_L$. Indeed, if $4\mid Q_L$, then \eqref{2.2} implies that $4\mid D$ -- a contradiction. Thus $Q_L\mid D\mid 2p$. 

On the other hand, $Q_L\mid D-P_L^2\in\{1,\ldots,D-1\}$. As result, $Q_L\in\{1,2,p\}$. However, $Q_L=1$ is not possible, as in that case, $\omega_L=\sqrt{D}+P_L$, so $\omega_{k+L}=\omega_k$ for $k\in\N_+$. Then $L$ would be a multiple of the length of period $T_D=2L$, which is a contradiction. If $Q_L=p$, then $D=2p$, but in this case we have $p\mid 2p-P_L^2$, which means that $p\mid P_L$. This is impossible as $0<P_L<\sqrt{2p}<p$. Finally we conclude that $Q_L=2$.
\end{proof}

Another consequence of Lemma \ref{L0} is the following.

\begin{prop}\label{P1}
Let $D\in\N_+$ not a perfect square. If $T_D=2L$ for some $L\in\N_+$, $2\mid Q_L$, and $4\nmid Q_L$, then $a_L\equiv D\pmod{2}$.
\end{prop}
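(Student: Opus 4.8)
The plan is to feed the hypotheses into the two identities coming out of Lemma~\ref{L0}: the equality $P_{L+1}=P_L$ and the relation $2P_L=a_LQ_L$ that \eqref{1} yields from it, together with \eqref{2.2}. The decisive structural input is the assumption $2\mid Q_L$ and $4\nmid Q_L$, which lets me write $Q_L=2m$ with $m$ odd; the whole argument is then a parity computation, so it is short.

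Concretely, I would substitute $Q_L=2m$ into \eqref{2.2}. From $4Q_LQ_{L+1}=4D-a_L^2Q_L^2$ one obtains $8mQ_{L+1}=4D-4a_L^2m^2$, hence $2mQ_{L+1}=D-a_L^2m^2$, i.e.
$$D=2mQ_{L+1}+a_L^2m^2.$$
Reducing this identity modulo $2$ and using that $m$ is odd (so $m^2\equiv 1\pmod 2$) gives $D\equiv a_L^2\equiv a_L\pmod 2$, which is exactly the assertion $a_L\equiv D\pmod 2$. One can also sidestep \eqref{2.2}: from $2P_L=a_LQ_L=2a_Lm$ we get $P_L=a_Lm$, and the divisibility $Q_L\mid D-P_L^2$ recorded in Section~\ref{sec:2} gives $2m\mid D-a_L^2m^2$, so again $D\equiv a_L^2m^2\equiv a_L\pmod 2$ since $m$ is odd.

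I do not expect a genuine obstacle here; the only point that needs care is that the relation $2P_L=a_LQ_L$ (equivalently $P_{L+1}=P_L$) is not a property of an arbitrary index but relies on $T_D=2L$, i.e. on the Galois/reversal symmetry of the period that identifies $\omega_{L+1}$ with the reverse of $\omega_L$. Thus the statement is really about the ``middle'' convergent data of an even period, and the proof must invoke Lemma~\ref{L0} (rather than just the general recurrences \eqref{1}--\eqref{2}) at precisely that spot.
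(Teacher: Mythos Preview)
Your proof is correct and is essentially identical to the paper's: both divide \eqref{2.2} by $4$ (you via the substitution $Q_L=2m$, the paper by writing $Q_L/2$) and then read off the parity from $D-a_L^2(Q_L/2)^2\equiv 0\pmod{2}$ with $Q_L/2$ odd. Your alternative through $P_L=a_Lm$ and $Q_L\mid D-P_L^2$ is a minor variant of the same computation.
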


\begin{proof}
After dividing \eqref{2.2} by $4$ we get
$$Q_LQ_{L+1}=D-a_L^2\cdot\left(\frac{Q_L}{2}\right)^2.$$
Since $Q_L$ is even and $\frac{Q_L}{2}$ is odd, we have $a_L\equiv D\pmod{2}$.
\end{proof}

The condition $Q_L=2$, being the assertion of Proposition \ref{T1}, has immediate but serious consequence.

\begin{prop}\label{P2}
Let $D\in\N_+$ not a perfect square. If $T_D=2L$ for some $L\in\N_+$ and $Q_L=2$, then $P_L=a_L\in\{\lfloor\sqrt{D}\rfloor -1, \lfloor\sqrt{D}\rfloor\}$.
\end{prop}

\begin{proof}
By \eqref{1} and Lemma \ref{L0} we have
$$P_L=2a_L-P_L,$$
so $P_L=a_L$. Recall that
$$a_L=\lfloor\omega_L\rfloor =\left\lfloor\frac{\sqrt{D}+P_L}{Q_L}\right\rfloor =\left\lfloor\frac{\sqrt{D}+a_L}{2}\right\rfloor .$$
Thus 
$$a_L\leq\frac{\sqrt{D}+a_L}{2}<a_L+1.$$
Equivalently,
$$2a_L\leq\sqrt{D}+a_L<2a_L+2$$
or
$$\sqrt{D} -2<a_L\leq\sqrt{D},$$
which was to prove.
\end{proof}

As a conclusion from Propositions \ref{T1}, \ref{P1}, and \ref{P2} we obtain the following generalization of Theorem \ref{T0}.

\begin{thm}\label{T2}
Let $p$ a prime number and $D\in\{p,2p\}$. If $T_D=2L$ for some $L\in\N_+$ and $\sqrt{D}=[a_0,\overline{a_1,\ldots,a_l}]$, then $Q_L=2$ and $P_L=a_L$ is an integer from the set $\{\lfloor\sqrt{D}\rfloor-1,\lfloor\sqrt{D}\rfloor\}$ with the same parity as $D$.
\end{thm}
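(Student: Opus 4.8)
The plan is simply to assemble the three preceding propositions; the genuine content has already been extracted, so this is a short deduction. The starting point is that $D\in\{p,2p\}$ with $p$ prime and $T_D=2L$, which is precisely the hypothesis of Proposition \ref{T1}. Applying it yields $Q_L=2$ at once. This is the one place where the arithmetic of $D$ (divisor structure of $2p$, the bound $0<P_L<\sqrt{2p}<p$) is actually used, and it is fair to regard it as the crux: for a general non-square $D$ one cannot expect $Q_L$ to be so small, so everything below is special to $D\in\{p,2p\}$.

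With $Q_L=2$ established, the remaining two assertions follow mechanically. Proposition \ref{P2}, whose sole hypothesis (beyond $T_D=2L$) is $Q_L=2$, gives both $P_L=a_L$ and $a_L\in\{\lfloor\sqrt D\rfloor-1,\lfloor\sqrt D\rfloor\}$. For the parity statement, note that $Q_L=2$ means $2\mid Q_L$ and $4\nmid Q_L$, so the hypotheses of Proposition \ref{P1} hold and it yields $a_L\equiv D\pmod 2$, i.e.\ $a_L$ has the same parity as $D$. Collecting these three conclusions is exactly the statement of Theorem \ref{T2}.

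Thus I do not anticipate any obstacle in writing the proof itself: it is a one-line combination of Propositions \ref{T1}, \ref{P1}, and \ref{P2} (which in turn rest on Lemma \ref{L0} and the recurrences \eqref{1} and \eqref{2}). The only thing worth remarking is that Theorem \ref{T2} is a clean strengthening of Theorem \ref{T0}: it removes the congruence condition $p\equiv 3\pmod 4$ and additionally pins down $Q_L$ and the equality $P_L=a_L$, information that will be convenient in the later sections when counting occurrences of a fixed partial quotient.
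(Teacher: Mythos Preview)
Your proposal is correct and matches the paper's own argument exactly: the paper also presents Theorem \ref{T2} as an immediate conclusion from Propositions \ref{T1}, \ref{P1}, and \ref{P2}, in precisely the order you describe. There is nothing to add.
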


Now we can also quite quickly reprove the following well-known fact (see, e.g. \cite[Satz 22, page 108]{Pe}).

\begin{cor}\label{C1}
If $p\equiv 1\pmod{4}$ is a prime number, then $T_p$ is odd.
\end{cor}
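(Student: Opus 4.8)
The plan is to argue by contradiction, supposing that $p \equiv 1 \pmod 4$ is prime but $T_p = 2L$ is even. Then Theorem \ref{T2} (or already Proposition \ref{T1} together with Proposition \ref{P1}) applies with $D = p$: we get $Q_L = 2$, so in particular $2 \mid Q_L$ and $4 \nmid Q_L$, and hence by Proposition \ref{P1} we have $a_L \equiv D = p \equiv 1 \pmod 2$; that is, $a_L$ is odd. Meanwhile, from \eqref{2.2} (the identity $4Q_LQ_{L+1} = 4D - a_L^2 Q_L^2$) with $Q_L = 2$ we obtain $Q_{L+1} = D - a_L^2 = p - a_L^2$.

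Now I would look at this equation modulo $4$. Since $a_L$ is odd, $a_L^2 \equiv 1 \pmod 8$, so in particular $a_L^2 \equiv 1 \pmod 4$; and $p \equiv 1 \pmod 4$ by hypothesis. Therefore $Q_{L+1} = p - a_L^2 \equiv 1 - 1 \equiv 0 \pmod 4$, so $4 \mid Q_{L+1}$. But by the general divisibility property recalled in the preliminaries, $Q_{L+1} \mid D - P_{L+1}^2 = p - P_{L+1}^2$; more to the point, I can run the same reasoning that proved $4 \nmid Q_L$ in Proposition \ref{T1}: applying \eqref{2.2} to the index $L+1$ in place of $L$ (the hypothesis $T_D = 2L$ is symmetric enough, or one simply reuses that $\omega_{L+1} = \frac{\sqrt D + P_{L+1}}{Q_{L+1}}$ with $P_{L+1} = P_L = a_L$), one finds that $4 \mid Q_{L+1}$ forces $4 \mid D = p$, which is absurd. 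The contradiction shows $T_p$ cannot be even.

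Alternatively, and perhaps more cleanly, I would avoid the second application of \eqref{2.2} entirely: we have just shown $Q_{L+1} = p - a_L^2$ with $a_L^2 < D = p$ (from $a_L = P_L$ and $P_L^2 < D$), so $Q_{L+1} \geq 1$; and $Q_{L+1} = p - a_L^2 = (p - a_L)(... )$ — no, $p$ is prime but $a_L^2$ is not a difference that factors nicely, so instead I note directly that $Q_{L+1} \mid D - P_{L+1}^2 = p - a_L^2 = Q_{L+1}$, which is automatic and gives nothing. So the cleanest route really is the modular one: $Q_{L+1} \equiv 0 \pmod 4$ combined with the fact (proved exactly as in Proposition \ref{T1}, using \eqref{2.2} at the symmetric index or using that the period $[\overline{a_{L+1}, \ldots}]$ of $\omega_{L+1}$ again has even length $2L$) that $4 \nmid Q_k$ for the "middle" index of an even period when $D \in \{p, 2p\}$.

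The only genuine subtlety is justifying that the structural results of Section \ref{sec:2} apply at the index $L+1$ as well as at $L$; but this is immediate because $P_{L+1} = P_L$ and $Q_{L+1} \mid 4D$ by Lemma \ref{L0}, and the elementary case analysis $Q_{L+1} \in \{1, 2, p\}$ (ruling out $1$ since $\omega_{L+1}$ is not purely periodic of period dividing $L$, and ruling out $p$ as in Proposition \ref{T1}) forces $Q_{L+1} = 2$ — contradicting $4 \mid Q_{L+1}$. Hence no even $T_p$ is possible and $T_p$ is odd. I expect the main obstacle to be purely presentational: deciding whether to invoke the index-$(L+1)$ version of Proposition \ref{T1} as a black box or to spell out the one-line parity argument $4 \mid Q_{L+1} \Rightarrow 4 \mid p$ directly from \eqref{2.2}.
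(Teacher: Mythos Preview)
There is a genuine gap. First, an arithmetic slip: from \eqref{2.2} with $Q_L=2$ you get $8Q_{L+1}=4p-4a_L^2$, hence $Q_{L+1}=(p-a_L^2)/2$, not $p-a_L^2$. With the corrected value and only the hypothesis $p\equiv 1\pmod 4$, one obtains merely $2\mid Q_{L+1}$ (since $p-a_L^2\equiv 0\pmod 4$); the stronger claim $4\mid Q_{L+1}$ would require $p\equiv 1\pmod 8$. More seriously, none of your proposed routes to a contradiction is valid: equation \eqref{2.2} and the case analysis $Q_L\in\{1,2,p\}$ in Proposition~\ref{T1} rest on the identity $P_{L+1}=P_L$ of Lemma~\ref{L0}, which is specific to the \emph{middle} index of the even period and does not transfer to index $L+1$ (there is no reason for $P_{L+2}=P_{L+1}$, nor for $Q_{L+1}\mid 4D$). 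So you cannot conclude $Q_{L+1}\in\{1,2,p\}$, and ``$4\mid Q_{L+1}\Rightarrow 4\mid p$ via \eqref{2.2}'' has no meaning at index $L+1$.

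The missing idea, which the paper supplies, is a propagation argument. From $Q_L=2$ and $Q_{L+1}$ even (this much you essentially have), the three–term recurrence $Q_{k+1}=Q_{k-1}+2a_kP_k-a_k^2Q_k$ (obtained from \eqref{2} together with $Q_{k-1}Q_k=p-P_k^2$) shows that once two consecutive $Q$'s are even, all subsequent ones are; this contradicts $Q_{2L}=1$. The paper in fact proves slightly more: for $p\equiv 1\pmod 4$ no $Q_n$ at all can be $\equiv 2\pmod 4$, since $Q_nQ_{n+1}=p-P_{n+1}^2$ then forces $P_{n+1}$ odd and hence $4\mid Q_nQ_{n+1}$, giving $Q_{n+1}$ even and launching the same propagation. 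Your difficulty is therefore not presentational; the structural lemmas at the middle index are not the right tool for the final step, and the parity-propagation through the full $Q_k$ sequence is the key input you are missing.
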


\begin{proof}
Using the above notation, we claim that $Q_n\not\equiv 2\pmod{4}$ for any $n\in\N$. Suppose for the contrary that $Q_n\equiv 2\pmod{4}$ for some $n\in\N$. From \eqref{2} we have 
$$Q_nQ_{n+1}=p-P_{n+1}^2.$$
Since both sides of the above equation are even, we conclude that $P_{n+1}$ is odd. Since $p\equiv P_{n+1}^2\equiv 1\pmod{4}$, we obtain that both sides of the equation are divisible by $4$. Hence $Q_{n+1}$ is even as $4\nmid Q_n$. Rewriting \eqref{2} in the following way
$$Q_{k+1}=\frac{p-P_k^2}{Q_k}+2a_kP_k-a_k^2Q_k=Q_{k-1}+2a_kP_k-a_k^2Q_k, \quad k\in\N_+,$$
as $Q_{k-1}Q_k=p-P_k^2$, we see that $Q_k$ is even for each $k\geq n$. This contradicts the periodicity of the sequence $(Q_k)_{k\in\N_+}$ and the fact that $Q_l=1$.

If we suppose that $T_p=2L$ for some $L\in\N_+$, then $Q_L=2$ by Proposition \ref{T1}. However we have just proved that $Q_k\not\equiv 2\pmod{4}$ for any $k\in\N$. Thus $T_p$ is odd.
\end{proof}

Corollary \ref{C1} can also be quite easily proved from the following interesting characterization from \cite[Theorem 3]{RipTay}.

\begin{thm}[{\cite[Theorem 3]{RipTay}}]
Let $D\in\N_+$ not a perfect square. Then $T_D$ is even if and only if $D=rs$, where $r,s\in\N_+$ satisfy one of the following conditions:
\begin{enumerate}
\item $x^2r-y^2s=\pm 2$ for some odd $x,y\in\Z$;
\item $r,s\neq 1$ and $x^2r-y^2s=\pm 1$ for some $x,y\in\Z$.
\end{enumerate}
\end{thm}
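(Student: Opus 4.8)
The plan is to first reduce to a statement about the negative Pell equation. Recall the classical identity $p_{k-1}^2 - Dq_{k-1}^2 = (-1)^k Q_k$ for the convergents $p_k/q_k$ of $\sqrt D$, together with the fact that $Q_k = 1$ precisely when $T_D \mid k$ (which is how Corollary \ref{C1} was argued). If $T_D$ is even then every such $k$ is even, while if $T_D$ is odd then $k = T_D$ gives $(-1)^{T_D}Q_{T_D} = -1$; hence $T_D$ is even if and only if $x^2 - Dy^2 = -1$ is insolvable in integers. So it suffices to show that $x^2-Dy^2=-1$ is insolvable if and only if $D=rs$ for some $r,s$ satisfying (1) or (2).

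For the direction ``$T_D$ even $\Rightarrow$ factorization'', write $T_D=2L$ and use the midpoint data: by Lemma \ref{L0}, $P_{L+1}=P_L$, so $2P_L=a_LQ_L$ and $D=P_L^2+Q_LQ_{L+1}$; moreover $p_{L-1}^2-Dq_{L-1}^2=\pm Q_L$, and $p_{L-1}=P_Lq_{L-1}+Q_Lq_{L-2}$ (compare rational and irrational parts in $\sqrt D=\frac{\omega_Lp_{L-1}+p_{L-2}}{\omega_Lq_{L-1}+q_{L-2}}$ with $\omega_L=\frac{\sqrt D+P_L}{Q_L}$). I split on the parity of $Q_L$. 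If $Q_L$ is odd then $a_L=2b$ is even, $P_L=bQ_L$, so $D=Q_L(b^2Q_L+Q_{L+1})$ and $p_{L-1}=Q_L(bq_{L-1}+q_{L-2})$; dividing $p_{L-1}^2-Dq_{L-1}^2=\pm Q_L$ by $Q_L$ yields $Q_L(bq_{L-1}+q_{L-2})^2-(D/Q_L)q_{L-1}^2=\pm1$, and since $Q_L\neq 1$ (else $\omega_{L+k}=\omega_k$ and the period would divide $L$) and $Q_L\le D-P_L^2<D$, this is case (2) with $r=Q_L$, $s=D/Q_L$. If $Q_L=2t$ is even then $P_L=a_Lt$, $D=t(a_L^2t+2Q_{L+1})$ and $p_{L-1}=t(a_Lq_{L-1}+2q_{L-2})$, so dividing by $t$ gives $t(a_Lq_{L-1}+2q_{L-2})^2-(D/t)q_{L-1}^2=\pm2$; here $q_{L-1}$ is odd (a common prime of $Q_L$ and $q_{L-1}$ would divide $p_{L-1}$, contradicting $\gcd(p_{L-1},q_{L-1})=1$), so if $a_Lq_{L-1}+2q_{L-2}$ is odd we are in case (1), and if it is even then reducing modulo $4$ forces $D/t\equiv 2\pmod 4$, so after cancelling a factor $2$ one lands in case (2) with $r=Q_L$, $s=D/Q_L$ (again $1<Q_L<D$).

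For the converse, assume $D=rs$ as in (1) or (2) and suppose for contradiction that $x^2-Dy^2=-1$ is solvable; equivalently, the fundamental unit $\eta>1$ of $\Z[\sqrt D]$ has norm $-1$. In case (2) set $\gamma=(x\sqrt r+y\sqrt s)^2=(x^2r+y^2s)+2xy\sqrt D\in\Z[\sqrt D]$; then $N(\gamma)=(x^2r-y^2s)^2=1$ and $\gamma>0$, so $\gamma=\eta^{2k}$ and $x\sqrt r+y\sqrt s=\pm\eta^k\in\Q(\sqrt D)=\Q(\sqrt{rs})$. If neither $r$ nor $s$ is a perfect square, then $\sqrt r,\sqrt s\notin\Q(\sqrt{rs})$, $\Q(\sqrt r,\sqrt s)$ is biquadratic over $\Q$, and the nontrivial automorphism fixing $\Q(\sqrt{rs})$ sends $\sqrt r\mapsto-\sqrt r$, $\sqrt s\mapsto-\sqrt s$; it both fixes $x\sqrt r+y\sqrt s$ and negates it, forcing $x=y=0$, contradicting $r,s>1$. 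Case (1) is similar once one notes $r\equiv s\pmod 2$ (since $x^2r-y^2s$ is even with $x,y$ odd): if $r,s$ are both even then $4\mid D$ and $x^2-Dy^2=-1$ fails modulo $4$; if $r,s$ are both odd, then $2r$ and $2s$ are non-squares with non-square product $4D$, the element $\tfrac{\gamma}{2}=\tfrac{x^2r+y^2s}{2}+xy\sqrt D$ lies in $\Z[\sqrt D]$ with norm $1$, and the same automorphism argument inside $\Q(\sqrt{2r},\sqrt{2s})$ (which has $\Q(\sqrt D)=\Q(\sqrt{4D})$ as a degree-$2$ subfield, and for which $\tfrac{\gamma}{2}=\big(\tfrac12(x\sqrt{2r}+y\sqrt{2s})\big)^2$) gives the contradiction.

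The step I expect to be the real obstacle is the remaining sub-case of (2) in which $r$ or $s$ — say $s=e^2$ — is a perfect square, since then the biquadratic argument collapses: $\Q(\sqrt r,\sqrt s)=\Q(\sqrt r)=\Q(\sqrt D)$. If $e$ is even we are done ($4\mid D$), and if $e$ has a prime factor $\equiv 3\pmod 4$ then $x^2-Dy^2=-1$ fails modulo it; but the case ``$e$ odd with every prime factor $\equiv 1\pmod 4$'' needs a genuinely arithmetic argument — e.g. passing to the overorder $\Z[\sqrt r]\supseteq\Z[\sqrt D]$, writing $D=re^2$, and using that the hypothesis produces a unit $ey+x\sqrt r$ of $\Z[\sqrt r]$ of norm $\mp1$ not lying in $\Z[\sqrt D]$, together with a conductor computation relating the fundamental units of the two orders (alternatively, a direct continued-fraction analysis in the spirit of \cite[\S 23--27]{Pe}). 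This sub-case, together with careful tracking of the sign conventions throughout, is where the bookkeeping is delicate.
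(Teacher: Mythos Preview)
The paper does not prove this theorem: it is quoted verbatim from \cite[Theorem~3]{RipTay} and then \emph{applied} to give an alternative derivation of Corollary~\ref{C1}. There is therefore no ``paper's own proof'' to compare your attempt against.

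On its own merits, your forward direction (even period $\Rightarrow$ factorization) is essentially the argument in \cite{RipTay}: midpoint analysis gives $2P_L=a_LQ_L$ and $p_{L-1}=P_Lq_{L-1}+Q_Lq_{L-2}$, and splitting on the parity of $Q_L$ produces the required factorization. That part is fine.

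Your converse, however, is genuinely incomplete, and you have correctly located the obstruction. The biquadratic/Galois argument in case~(2) collapses exactly when one of $r,s$ is a perfect square, and your proposed rescues (congruence modulo $4$, or modulo a prime $\equiv 3\pmod 4$) do not cover the sub-case where $s=e^2$ with $e$ odd and every prime factor of $e$ congruent to $1\pmod 4$. This sub-case really occurs: for instance $D=650=26\cdot 25$ with $26\cdot 1^2-25\cdot 1^2=1$, and one checks that no other factorization of $650$ satisfies (1) or (2). So the gap is not merely cosmetic. The sketch you offer at the end --- passing to the overorder $\Z[\sqrt r]$ and comparing units of the two orders via the conductor --- is a plausible route (and close to how \cite{RipTay} argues), but as written it is only a sketch: you have produced a unit $ye+x\sqrt r\in\Z[\sqrt r]\setminus\Z[e\sqrt r]$, yet have not explained why its existence is incompatible with the fundamental unit of $\Z[\sqrt D]$ having norm $-1$. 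Until that step is made precise, the converse remains unproved.
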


In our case $D=p\equiv 1\pmod {4}$ is a prime number, so condition (2) from the above theorem is obviously not satisfied. Meanwhile, condition (1) is equivalent to the existence of a solution of the equation $x^2-py^2=\pm 2$ in odd integers. However, since $p\equiv 1\pmod {4}$, for all odd integers $x,y$ the value of $x^2-py^2$ is divisible by $4$. As a consequence, we again obtain that $T_p$ is odd.

Theorem \ref{T0} and Corollary \ref{C1} finally imply the following equivalence.

\begin{cor}\label{C2}
Let $p$ a prime number. Then
\begin{enumerate}
\item $T_p$ is even if and only if $p\equiv 3\pmod{4}$;
\item $T_p$ is divisible by $4$ if and only if $p\equiv 7\pmod{8}$.
\end{enumerate}
\end{cor}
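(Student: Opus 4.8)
The plan is to deduce both equivalences directly from Corollary \ref{C1} and Theorem \ref{T0}, splitting an odd prime according to its residue modulo $4$; the prime $p=2$ is handled trivially, since $\sqrt 2=[1,\overline 2]$ gives $T_2=1$, which is odd and not divisible by $4$, and indeed $2\not\equiv 3\pmod 4$ and $2\not\equiv 7\pmod 8$, so both asserted equivalences hold vacuously in that case.

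For part (1), I would first record that an odd prime $p$ satisfies either $p\equiv 1\pmod 4$ or $p\equiv 3\pmod 4$. If $p\equiv 1\pmod 4$, then Corollary \ref{C1} says $T_p$ is odd, hence not even. If $p\equiv 3\pmod 4$, then Theorem \ref{T0} applied with $D=p$ gives $T_p=2L$ for some $L\in\N_+$, so $T_p$ is even. Combining the two cases yields that $T_p$ is even if and only if $p\equiv 3\pmod 4$.

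For part (2), one direction starts from the observation that $4\mid T_p$ forces $2\mid T_p$, so part (1) gives $p\equiv 3\pmod 4$; Theorem \ref{T0} then applies and yields $T_p=2L$ with $L$ even exactly when $p\equiv 7\pmod 8$. Since $4\mid 2L$ if and only if $L$ is even, this shows $4\mid T_p$ if and only if $p\equiv 7\pmod 8$. Conversely, if $p\equiv 7\pmod 8$, then in particular $p\equiv 3\pmod 4$, so Theorem \ref{T0} again gives $T_p=2L$ with $L$ even, whence $4\mid T_p$.

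No genuine obstacle is expected: the proof is a bookkeeping argument assembling the already-established facts. The only points requiring a little care are the degenerate prime $p=2$ and ensuring that the hypothesis $p\equiv 3\pmod 4$ of Theorem \ref{T0} is verified before the theorem is invoked — which is precisely what part (1) supplies in the second half of the argument.
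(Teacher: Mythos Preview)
Your proof is correct and follows exactly the approach indicated in the paper: the corollary is stated there as an immediate consequence of Theorem \ref{T0} and Corollary \ref{C1}, and you have simply written out the case analysis that makes this implication explicit (including the trivial check for $p=2$).
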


Note that in the case $D=2p$, where $p$ is an odd prime number, the condition $p\equiv 3\pmod{4}$ implies $2\mid T_D$ (by Theorem \ref{T0}) but the inverse implication does not hold as $\sqrt{34}=[5,\overline{1,4,1,10}]$.

\bigskip

Let us further recall the polynomials $q_n\in\Z[x_1,\ldots,x_n]$ that are defined recursively  as follows:
\begin{align*}
q_{-1} & = 0,\\
q_0 & = 1,\\
q_n(x_1,\ldots,x_n) & = x_nq_{n-1}(x_1,\ldots,x_{n-1})+q_{n-2}(x_1,\ldots,x_{n-2}),\quad n\geq 1.
\end{align*}

These polynomials are connected with continued fractions via the indentity
$$[x_0,x_1,\ldots ,x_n]=\frac{q_{n+1}(x_0,x_1,\ldots,x_n)}{q_n(x_1,\ldots,x_n)}.$$

Also note that the defining recurrence can be rewritten as matrix multiplication:
\begin{equation}\label{3}
\left[\begin{array}{cc}
q_n(x_1,\ldots,x_n) & q_{n-1}(x_1,\ldots,x_{n-1})\\
q_{n-1}(x_2,\ldots,x_n) & q_{n-2}(x_2,\ldots,x_{n-1})
\end{array}\right]=\left[\begin{array}{cc}
x_1 & 1\\
1 & 0
\end{array}\right]\cdots\left[\begin{array}{cc}
x_n & 1\\
1 & 0
\end{array}\right]
\end{equation}

As an easy consequence of \eqref{3} we get
\begin{equation}\label{4}
q_n(x_1,\ldots,x_n)=q_n(x_n,\ldots,x_1)
\end{equation}
and
\begin{equation}\label{8}
q_n(x_1,\ldots,x_n)q_{n-2}(x_2,\ldots,x_{n-1})-q_{n-1}(x_1,\ldots,x_{n-1})q_{n-1}(x_2,\ldots,x_n)=(-1)^n.
\end{equation}

\section{Proof of Theorem \ref{TA}}\label{sec:3}

Let $D\in\N_+$. Write $\sqrt{D}=[a_0,\overline{a_1,\ldots,a_l}]$, where $T_D=l$ and $a_j=a_{l-j}$ for $j\in\{1,\ldots,l-1\}$. 

\

Assume that $D=p$ is a prime number. The value $a_l=2a_0$ is even, so if $a$ is odd, then it appears an odd number of times in the period of continued fraction of $\sqrt{p}$ exactly when $l=2L$ for some $L\in\N_+$ and $a_L=a$. According to Corollary \ref{C2} the number $p$ is congruent to $3$ modulo $4$. Then Theorem \ref{T0} implies that $\lfloor\sqrt{p}\rfloor\in\{a,a+1\}$, so $a<\sqrt{p}<a+2$. This means that $a^2<p<(a+2)^2$.

Conversely, let $p\equiv 3\pmod{4}$ a prime number and $a$ an odd positive integer such that $a^2<p<(a+2)^2$. Then $a<\sqrt{p}<a+2$, so $\lfloor\sqrt{p}\rfloor\in\{a,a+1\}$. Since $p\equiv 3\pmod{4}$, we have $T_p=2L$ for some $L\in\N_+$ in virtue of Corollary \ref{C2}. Theorem \ref{T0} states that $a_L\equiv p\equiv 1\pmod{2}$ and $\lfloor\sqrt{p}\rfloor\in\{a,a+1\}$. Hence $a_L=a$. Because of parity of $a_{2L}$ and palidromicity of the period of continued fraction of $\sqrt{p}$ we conclude that $a$ appears an odd number of times in this period.

Now we consider the case of even $a$. If $l=2L$ for some $L\in\N_+$, then $p\equiv 3\pmod{4}$ by Corollary \ref{C2} and $a_L$ is odd by Theorem \ref{T0}. Thus, $a$ appears an odd number of times in the period of continued fraction of $\sqrt{p}$ if and only if $a_l=a$. On the other hand, we know that $a_l=2\lfloor\sqrt{p}\rfloor$. Hence, $\lfloor\sqrt{p}\rfloor=\frac{a}{2}$, equivalently $\frac{a}{2}<\sqrt{p}<\frac{a+2}{2}$. Finally we get $\frac{a^2}{4}<p<\frac{(a+2)^2}{4}$.

Conversely, let $p$ a prime number and $a$ an even positive integer such that $\frac{a^2}{4}<p<\frac{(a+2)^2}{4}$. Put $T_p=l$. Then $\frac{a}{2}<\sqrt{p}<\frac{a+2}{2}$, so $\lfloor\sqrt{p}\rfloor =\frac{a}{2}$. Thus $a=2\lfloor\sqrt{p}\rfloor =a_l$. If $T_p=2L$ for some $L\in\N_+$, then $a_L\equiv p\equiv 1\pmod{2}$ by Theorem \ref{T0} and Corollary \ref{C2}. Because of palidromicity of the period of continued fraction of $\sqrt{p}$ we conclude that $a$ appears an odd number of times in this period.

\ 

We are left with proving the theorem for $D=2p$, where $p$ is a prime number. Since the values of $a_l=2\lfloor\sqrt{2p}\rfloor$ and $a_{l/2}$ (if $2\mid l$) are even by Theorem \ref{T0}, any odd positive integer appears an even number of times in the periodic part of continued fraction of $\sqrt{2p}$. If $a$ is an even number appearing an odd number of times in the periodic part of continued fraction of $\sqrt{2p}$, then $a\in\{a_{l/2},a_l\}$. If $a=a_{l/2}$, then $\lfloor\sqrt{2p}\rfloor\in\{a,a+1\}$, so $a<\sqrt{2p}<a+2$. This means that $a^2<2p<(a+2)^2$, equivalently $\frac{a^2}{2}<p<\frac{(a+2)^2}{2}$. If $a=a_l$, then $a=2\lfloor\sqrt{2p}\rfloor$. Hence, $\lfloor\sqrt{2p}\rfloor=\frac{a}{2}$, equivalently $\frac{a}{2}<\sqrt{2p}<\frac{a+2}{2}$. Thus we get $\frac{a^2}{4}<2p<\frac{(a+2)^2}{4}$ and finally $\frac{a^2}{8}<p<\frac{(a+2)^2}{8}$.

Now, let $p$ a prime number and $a$ an even positive integer such that $\frac{a^2}{8}<p<\frac{(a+2)^2}{8}$. Write $T_{2p}=l$. Then $\frac{a^2}{4}<2p<\frac{(a+2)^2}{4}$ and $\lfloor\sqrt{2p}\rfloor=\frac{a}{2}$. Consequently $a=2\lfloor\sqrt{2p}\rfloor =a_l$. If $2\mid l$, then $a_{\frac{l}{2}}\in\{\lfloor\sqrt{2p}\rfloor ,\lfloor\sqrt{2p}\rfloor -1\}$ by Theorem \ref{T2}. Hence, $a_{\frac{l}{2}}<a_l=a$. This fact combined with palindromicity of the period of continued fraction of $\sqrt{2p}$ implies that $a$ appears an odd number of times in this period.

Finally, let $p$ a prime number and $a$ an even positive integer such that $\frac{a^2}{2}<p<\frac{(a+2)^2}{2}$ and $T_{2p}=2L$ for some $L\in\N_+$. Then $a^2<2p<(a+2)^2$ and $\lfloor\sqrt{2p}\rfloor\in\{a,a+1\}$. By Theorem \ref{T2} we get that $a=a_L$. Since $a_{2L}=2\lfloor\sqrt{2p}\rfloor >a$, by palindromicity  of the period of continued fraction of $\sqrt{2p}$, we conclude that $a$ appears an odd number of times in this period. 
\hfill $\square$

\section{Proof of Theorem \ref{TB}}\label{sec:4}

Write $\sqrt{D}=[a_0,\overline{a_1,\ldots,a_l}]$, where $T_D=l$ is divisible by 4 and $L=\frac{l}{2}$. We split the proof into two cases depending on the parity of $\lfloor\sqrt{D}\rfloor$. We start with the simpler case when $\lfloor\sqrt{D}\rfloor$ has different parity than $D$.

\begin{proof}[Proof of Theorem $\ref{TB}$ in the case of different parity]
Assume that $\lfloor\sqrt{D}\rfloor$ has different parity than $D$.
By Theorem \ref{T2} we have $Q_L=2$ and $P_L=a_L=\lfloor\sqrt{D}\rfloor -1$. Thus 
$$\omega_L-a_L=\frac{\sqrt{D}-\lfloor\sqrt{D}\rfloor +1}{2}>\frac{1}{2}.$$
As $\omega_{L+1}=\frac{1}{\omega_L-a_L}<2$, we have $a_{L+1}=\lfloor\omega_{L+1}\rfloor =1$, finishing the proof.
\end{proof}

Note that because of palindromicity of the sequence $(a_1,\ldots,a_{l-1})$ we also conclude that $a_{L-1}=1$. Actually, the fact that $a_{L-1}=a_{L+1}=1$ if $\lfloor\sqrt{D}\rfloor\not\equiv D\pmod{2}$ also follows directly from \cite[Satz 16, page 96]{Pe}. In spite of this, we decided to demonstrate its proof for the completeness of the proof of Theorem \ref{TB}.

\bigskip

The proof of the theorem for $\lfloor\sqrt{D}\rfloor$ of the same parity as $D$ is much longer and requires the use of polynomials $q_n$. We know from Theorem \ref{T2} that the continued fraction of $\sqrt{D}$ is of the form $[a,\overline{a_1,\ldots,a_{L-1},a,a_{L-1},\ldots,a_1,2a}]$, where $a,a_1,\ldots,a_{L-1}\in\N_+$. Here $a=\lfloor\sqrt{D}\rfloor$. If $a=1$, then $1$ appears in the period of continued fraction of $\sqrt{D}$. If $a=2$, then $D\in [4,9)$ is even. It means that $D=2p$ for some prime $p$. Consequently, $D\in\{4,6\}$. If $D=4$, then $\sqrt{D}=2$ is not irrational. If $D=6$, then $\sqrt{6}=[2,\overline{2,4}]$, so $4\nmid T_6$. Thus, we can assume that $a\geq 3$. We want to expand the expression $[a,\overline{a_1,\ldots,a_{L-1},a,a_{L-1},\ldots,a_1,2a}]^2 ( = D )$ in terms of polynomials $q_n$. 

From now on till the end of this section we assume that $L,a,a_1,\ldots,a_{L-1}\in\N_+$ and $a\geq 3$.

\begin{lemma}\label{L1}
We have
\begin{align*}
&[a,\overline{a_1,\ldots,a_{L-1},a,a_{L-1},\ldots,a_1,2a}]^2\\
=&\ a^2+2a\frac{q_{L-2}(a_2,\ldots,a_{L-1})}{q_{L-1}(a_1,\ldots,a_{L-1})}+\frac{q_{L-2}(a_2,\ldots,a_{L-1})^2+2\cdot (-1)^{L-1}}{q_{L-1}(a_1,\ldots,a_{L-1})^2}\\
&+\frac{2\cdot (-1)^{L-1}(q_{L-2}(a_2,\ldots,a_{L-1})-2q_{L-2}(a_1,\ldots,a_{L-2}))}{q_{L-1}(a_1,\ldots,a_{L-1})^2(aq_{L-1}(a_1,\ldots,a_{L-1})+2q_{L-2}(a_1,\ldots,a_{L-2}))}.
\end{align*}
\end{lemma}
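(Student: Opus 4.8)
The plan is to avoid computing the continued fraction directly and instead pass to the auxiliary \emph{purely} periodic continued fraction obtained by folding the leading $a$ into the period, translate pure periodicity into a matrix eigenvector relation via \eqref{3}, and read off a quadratic equation whose solution is exactly the asserted formula.

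First I would set $\beta:=[a,\overline{a_1,\ldots,a_{L-1},a,a_{L-1},\ldots,a_1,2a}]$, so that $\beta=a+1/\theta$ with $\theta:=[\overline{a_1,\ldots,a_{L-1},a,a_{L-1},\ldots,a_1,2a}]$. Since the cyclic shift of this period by one position gives $[\overline{2a,a_1,\ldots,a_{L-1},a,a_{L-1},\ldots,a_1}]=2a+1/\theta$, the number $\gamma:=a+\beta$ equals the purely periodic continued fraction $[\overline{2a,a_1,\ldots,a_{L-1},a,a_{L-1},\ldots,a_1}]$ of period length $2L$; in particular $\gamma=[2a,a_1,\ldots,a_{L-1},a,a_{L-1},\ldots,a_1,\gamma]$. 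Writing $M(x)=\left[\begin{smallmatrix}x&1\\1&0\end{smallmatrix}\right]$ and $A:=M(a_1)\cdots M(a_{L-1})$, and using that each $M(x)$ is symmetric, so reversing a product transposes it and $M(a_{L-1})\cdots M(a_1)=A^{\mathsf T}$, this last identity says precisely that $\left[\begin{smallmatrix}\gamma\\1\end{smallmatrix}\right]$ is an eigenvector of $\mathcal{M}:=M(2a)\,A\,M(a)\,A^{\mathsf T}$. By \eqref{3} one has $A=\left[\begin{smallmatrix}u&v'\\v&A_{22}\end{smallmatrix}\right]$ with $u:=q_{L-1}(a_1,\ldots,a_{L-1})$, $v':=q_{L-2}(a_1,\ldots,a_{L-2})$, $v:=q_{L-2}(a_2,\ldots,a_{L-1})$, $A_{22}=q_{L-3}(a_2,\ldots,a_{L-2})$, and $\det A=(-1)^{L-1}$ — which is \eqref{8}, equivalently $\det M(x)=-1$ — so that $A_{22}$ can be eliminated through $uA_{22}=vv'+(-1)^{L-1}$.

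Next, the eigenvector relation yields $\mathcal{M}_{21}\gamma^2+(\mathcal{M}_{22}-\mathcal{M}_{11})\gamma-\mathcal{M}_{12}=0$. I would multiply out $\mathcal{M}=M(2a)\,A\,M(a)\,A^{\mathsf T}$ (three $2\times2$ products) and expect to obtain $\mathcal{M}_{21}=u(au+2v')$ together with $\mathcal{M}_{11}-\mathcal{M}_{22}=2au(au+2v')=2a\,\mathcal{M}_{21}$. The quadratic for $\gamma$ then collapses to $\gamma^2-2a\gamma-\mathcal{M}_{12}/\mathcal{M}_{21}=0$, and substituting $\gamma=a+\beta$ and cancelling gives $\beta^2=a^2+\mathcal{M}_{12}/\mathcal{M}_{21}$. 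It then remains to identify this quotient with the sum of the three fractions in the statement: one computes $\mathcal{M}_{12}=2a^2uv+av^2+4avv'+2a(-1)^{L-1}+2vA_{22}$, substitutes $A_{22}=(vv'+(-1)^{L-1})/u$, brings $\mathcal{M}_{12}/\mathcal{M}_{21}$ over the common denominator $u^2(au+2v')$, and expands; the numerator should match, monomial by monomial, the one obtained from the right-hand side of the lemma over the same denominator.

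The only genuine work — and the most likely source of a slip — is the bookkeeping in these matrix products and correctly tracking the reversal identity $M(a_{L-1})\cdots M(a_1)=A^{\mathsf T}$ (i.e.\ \eqref{4}); everything else is formal. Note that this argument also shows, as part of the claim, that $\beta^2$ is a rational function of $a,a_1,\ldots,a_{L-1}$ at all — a point not obvious a priori. Finally, the degenerate case $L=1$ (no $a_i$'s, so $A$ is the identity, $u=1$, $v=v'=0$, $(-1)^{L-1}=1$) is handled by exactly the same computation and returns the expected value $[a,\overline{a,2a}]^2=a^2+2$.
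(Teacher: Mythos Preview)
Your argument is correct and takes a genuinely different route from the paper's. The paper starts from an external formula \cite[Theorem 3.3]{MiUl}, which already expresses the square as $\bigl(a+q_{2L-2}/q_{2L-1}\bigr)^2-1/q_{2L-1}^2$, and then ``extracts'' $a$ from the long polynomials $q_{2L-1}$ and $q_{2L-2}$ via the matrix identity \eqref{3}, substitutes, and simplifies using \eqref{8}. You instead bypass the citation entirely: by observing that $\gamma=a+\beta$ is purely periodic with period matrix $\mathcal{M}=M(2a)\,A\,M(a)\,A^{\mathsf T}$, you obtain the quadratic $\mathcal{M}_{21}\gamma^2+(\mathcal{M}_{22}-\mathcal{M}_{11})\gamma-\mathcal{M}_{12}=0$ directly, and the symmetry of $A\,M(a)\,A^{\mathsf T}$ makes the key cancellation $\mathcal{M}_{11}-\mathcal{M}_{22}=2a\,\mathcal{M}_{21}$ transparent rather than a computational accident, so the equation collapses to $\beta^2=a^2+\mathcal{M}_{12}/\mathcal{M}_{21}$. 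Your approach is thus more self-contained and structurally clearer about why the formula simplifies; the paper's approach is shorter once the external result is granted. The residual verification that $\mathcal{M}_{12}/\mathcal{M}_{21}$ matches the three fractions is the same algebra either way, and I have checked it goes through with your expressions for $\mathcal{M}_{12}$ and $\mathcal{M}_{21}$.
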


\begin{proof}
By \cite[Theorem 3.3]{MiUl} we know that
\begin{equation}\label{5}
\begin{split}
&[a,\overline{a_1,\ldots,a_{L-1},a,a_{L-1},\ldots,a_1,2a}]^2\\
=&\ \left(a+\frac{q_{2L-2}(a_1,\ldots,a_{L-1},a,a_{L-1},\ldots,a_2)}{q_{2L-1}(a_1,\ldots,a_{L-1},a,a_{L-1},\ldots,a_1)}\right)^2-\frac{1}{q_{2L-1}(a_1,\ldots,a_{L-1},a,a_{L-1},\ldots,a_1)^2}.
\end{split}
\end{equation}
Now we ``extract" $a$ from the polynomials $q_{2L-2}(a_1,\ldots,a_{L-1},a,a_{L-1},\ldots,a_2)$ and\linebreak $q_{2L-1}(a_1,\ldots,a_{L-1},a,a_{L-1},\ldots,a_1)$, i.e. write them as expressions of $a$ and polynomials $q_n$ depending on $a_1,\ldots,a_{L-1}$ only. The identity \eqref{3} gives us the following:
\begin{align*}
&\left[\begin{array}{cc}
q_{2L-1}(a_1,\ldots,a_{L-1},a,a_{L-1},\ldots,a_1) & q_{2L-2}(a_1,\ldots,a_{L-1},a,a_{L-1},\ldots,a_2)\\
q_{2L-2}(a_2,\ldots,a_{L-1},a,a_{L-1},\ldots,a_1) & q_{2L-3}(a_2,\ldots,a_{L-1},a,a_{L-1},\ldots,a_2)
\end{array}\right]\\
=&\ \left[\begin{array}{cc}
a_1 & 1\\
1 & 0
\end{array}\right]\cdots\left[\begin{array}{cc}
a_{L-1} & 1\\
1 & 0
\end{array}\right]\cdot\left[\begin{array}{cc}
a & 1\\
1 & 0
\end{array}\right]\cdot\left[\begin{array}{cc}
a_{L-1} & 1\\
1 & 0
\end{array}\right]\cdots\left[\begin{array}{cc}
a_1 & 1\\
1 & 0
\end{array}\right]\\
=&\ \left[\begin{array}{cc}
q_{L-1}(a_1,\ldots,a_{L-1}) & q_{L-2}(a_1,\ldots,a_{L-2})\\
q_{L-2}(a_2,\ldots,a_{L-1}) & q_{L-3}(a_2,\ldots,a_{L-2})
\end{array}\right]\cdot\left[\begin{array}{cc}
a & 1\\
1 & 0
\end{array}\right]\\
& \cdot\left[\begin{array}{cc}
q_{L-1}(a_1,\ldots,a_{L-1}) & q_{L-2}(a_2,\ldots,a_{L-1})\\
q_{L-2}(a_1,\ldots,a_{L-2}) & q_{L-3}(a_2,\ldots,a_{L-2})
\end{array}\right]\\
=&\ \left[\begin{array}{cc}
aq_{L-1}(a_1,\ldots,a_{L-1})+q_{L-2}(a_1,\ldots,a_{L-2}) & q_{L-1}(a_1,\ldots,a_{L-1})\\
aq_{L-2}(a_2,\ldots,a_{L-1})+q_{L-3}(a_2,\ldots,a_{L-2}) & q_{L-2}(a_2,\ldots,a_{L-1})
\end{array}\right]\\
&\cdot\left[\begin{array}{cc}
q_{L-1}(a_1,\ldots,a_{L-1}) & q_{L-2}(a_2,\ldots,a_{L-1})\\
q_{L-2}(a_1,\ldots,a_{L-2}) & q_{L-3}(a_2,\ldots,a_{L-2})
\end{array}\right],
\end{align*}
where in the second equality we used \eqref{4}. Hence,
\begin{equation}\label{6}
\begin{split}
&q_{2L-1}(a_1,\ldots,a_{L-1},a,a_{L-1},\ldots,a_1)\\
=&\ aq_{L-1}(a_1,\ldots,a_{L-1})^2+2q_{L-1}(a_1,\ldots,a_{L-1})q_{L-2}(a_1,\ldots,a_{L-2})
\end{split}
\end{equation}
and
\begin{equation}\label{7}
\begin{split}
&q_{2L-2}(a_1,\ldots,a_{L-1},a,a_{L-1},\ldots,a_2)\\
=&\ aq_{L-1}(a_1,\ldots,a_{L-1})q_{L-2}(a_2,\ldots,a_{L-1})+q_{L-2}(a_1,\ldots,a_{L-2})q_{L-2}(a_2,\ldots,a_{L-1})\\
&+q_{L-1}(a_1,\ldots,a_{L-1})q_{L-3}(a_2,\ldots,a_{L-2}).
\end{split}
\end{equation}
For the convenience of notation, let us write
\begin{align*}
R=q_{L-1}(a_1,\ldots,a_{L-1}),\quad & S=q_{L-2}(a_1,\ldots,a_{L-2})\\
T=q_{L-2}(a_2,\ldots,a_{L-1}),\quad & U=q_{L-3}(a_2,\ldots,a_{L-2}).
\end{align*}
Plugging \eqref{6} and \eqref{7} into \eqref{5}, we get
\begin{align*}
&[a,\overline{a_1,\ldots,a_{L-1},a,a_{L-1},\ldots,a_1,2a}]^2\\
&=\left(a+\frac{aRT+ST+RU}{aR^2+2RS}\right)^2-\frac{1}{(aR^2+2RS)^2}\\
&=\left(a+\frac{T}{R}+\frac{RU-ST}{aR^2+2RS}\right)^2-\frac{1}{(aR^2+2RS)^2}\\
&=\left(a+\frac{T}{R}+\frac{(-1)^{L-1}}{aR^2+2RS}\right)^2-\frac{1}{(aR^2+2RS)^2}\\
&=a^2+2a\frac{T}{R}+\frac{2\cdot (-1)^{L-1}a}{aR^2+2RS}+\frac{2\cdot (-1)^{L-1}T}{R(aR^2+2RS)}+\frac{T^2}{R^2}\\
&=a^2+2a\frac{T}{R}+\frac{T^2+2\cdot (-1)^{L-1}}{R^2}+\frac{2\cdot (-1)^{L-1}(T-2S)}{R^2(aR+2S)},
\end{align*}
where we used \eqref{8} in the third equality. This was to prove.
\end{proof}

Remember that $[a,\overline{a_1,\ldots,a_{L-1},a,a_{L-1},\ldots,a_1,2a}]^2=D\in\Z$. Thus we need to have a necessary condition for the expression $[a,\overline{a_1,\ldots,a_{L-1},a,a_{L-1},\ldots,a_1,2a}]^2$ to be integral.

\begin{cor}\label{C3}
If $$[a,\overline{a_1,\ldots,a_{L-1},a,a_{L-1},\ldots,a_1,2a}]^2\in\Z,$$ then $$q_{L-2}(a_2,\ldots,a_{L-1})=2q_{L-2}(a_1,\ldots,a_{L-2}).$$
\end{cor}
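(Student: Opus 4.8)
The plan is to read off, from Lemma \ref{L1}, which part of the formula for $D$ can fail to be an integer, and then to show that part is too small to do so. I keep the abbreviations $R=q_{L-1}(a_1,\ldots,a_{L-1})$, $S=q_{L-2}(a_1,\ldots,a_{L-2})$, $T=q_{L-2}(a_2,\ldots,a_{L-1})$ introduced in the proof of Lemma \ref{L1}, so that the hypothesis reads
$$a^2+2a\frac{T}{R}+\frac{T^2+2(-1)^{L-1}}{R^2}+\frac{2(-1)^{L-1}(T-2S)}{R^2(aR+2S)}\in\Z.$$
First I would multiply this identity by $R^2(aR+2S)$. The quantities $D\cdot R^2(aR+2S)$, $a^2R^2(aR+2S)$, $2aTR(aR+2S)$ and $(T^2+2(-1)^{L-1})(aR+2S)$ are all integer multiples of $aR+2S$; subtracting them leaves that $aR+2S$ divides $2(-1)^{L-1}(T-2S)$, hence divides $2(T-2S)$.

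It then remains to bound $|2(T-2S)|$ strictly below $aR+2S$. From the recurrence defining $q_n$ one has $R=a_{L-1}S+q_{L-3}(a_1,\ldots,a_{L-3})\ge S$, and applying the symmetry \eqref{4} to reverse the string of arguments, $R=q_{L-1}(a_{L-1},\ldots,a_1)=a_1T+q_{L-3}(a_{L-1},\ldots,a_3)\ge T$; moreover $S,T\ge 1$ since $q_0=1$ and continuants of positive integers are positive. Because $2T$ and $4S$ are both positive, $|2T-4S|<\max(2T,4S)$, and $\max(2T,4S)\le 2R+2S\le aR+2S$ as $a\ge 3$; therefore $|2(T-2S)|<aR+2S$.

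A positive integer that divides an integer of strictly smaller absolute value must divide $0$. Combining this with the divisibility and the strict bound just obtained forces $2(T-2S)=0$, that is $T=2S$, which is precisely the claimed identity $q_{L-2}(a_2,\ldots,a_{L-1})=2q_{L-2}(a_1,\ldots,a_{L-2})$.

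The only real work here is bookkeeping: re-establishing $S\le R$ and $T\le R$ (the latter via \eqref{4}), checking that the degenerate value $L=2$ still satisfies the inequalities, and carrying the sign $(-1)^{L-1}$ along harmlessly since only $|T-2S|$ and divisibility by $aR+2S$ ever enter. I do not anticipate any genuine obstacle beyond this.
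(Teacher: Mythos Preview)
Your proof is correct and follows essentially the same route as the paper's: both clear the denominator $R^2$ in the formula from Lemma~\ref{L1} to obtain $aR+2S\mid 2(T-2S)$, then bound $|2(T-2S)|<aR+2S$ using $a\ge 3$ together with $R\ge S$ and $R\ge T$ to force $T=2S$. Your write-up is in fact a bit more explicit than the paper's in justifying the inequalities $R\ge S$ and $R\ge T$, and your chain $|2T-4S|<\max(2T,4S)\le 2R+2S\le aR+2S$ handles the borderline case $L=2$, $a_1=1$, $a=3$ cleanly, whereas the paper's intermediate inequality $aR+2S>3T+2S$ is actually an equality there (though the final bound still goes through).
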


\begin{proof}
Use the notation from the previous proof. By Lemma \ref{L1}, we have
\begin{align*}
&[a,\overline{a_1,\ldots,a_{L-1},a,a_{L-1},\ldots,a_1,2a}]^2\\
=&\ a^2+2a\frac{T}{R}+\frac{T^2+2\cdot (-1)^{L-1}}{R^2}+\frac{2\cdot (-1)^{L-1}(T-2S)}{R^2(aR+2S)}.
\end{align*}
If the above value is integral, then all the summands in the right hand side of the above equality can be written as quotients of integers with common denominator $R^2$. This means that $aR+2S$ divides $2(T-2S)$. However,
$$aR+2S>3T+2S\geq 2T-4S$$
and
$$aR+2S>3S+2S\geq 4S-2T.$$
In other words, $aR+2S>|2(T-2S)|$. This fact combined with the divisibility of $2(T-2S)$ by $aR+2S$ implies that $T=2S$.
\end{proof}

Corollary \ref{C3} has been already stated and proved as Satz 28 in \cite[page 112]{Pe}. However, we decided to show its proof via Lemma \ref{L1} for the completeness of the proof of Theorem \ref{TB}.

Now we recall that $L=2m$, $m\in\N_+$ as $4\mid T_D=2L$. With this additional assumption we show a necessary condition for the equality $q_{L-2}(a_2,\ldots,a_{L-1})=2q_{L-2}(a_1,\ldots,a_{L-2})$.

\begin{lemma}\label{L2}
If $q_{2m-2}(a_2,\ldots,a_{2m-1})=2q_{2m-2}(a_1,\ldots,a_{2m-2})$, then $a_j=1$ for some $j\in\{1,\ldots,2m-1\}$.
\end{lemma}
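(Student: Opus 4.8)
The plan is to prove the statement by strong induction on $m$. For $m=1$ the supposed identity reads $q_0=2q_0$, i.e.\ $1=2$, so there is nothing to prove. So let $m\ge 2$ and suppose $q_{2m-2}(a_2,\dots,a_{2m-1})=2q_{2m-2}(a_1,\dots,a_{2m-2})$. If $a_2=1$ we are done (take $j=2$), so assume from now on that $a_2\ge 2$.

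The key step is to expand the two continuants from \emph{opposite} ends so that they share a common factor. Applying the defining recurrence for $q_n$ (as stated in Section~\ref{sec:2}) on the right of $q_{2m-2}(a_2,\dots,a_{2m-1})$, and its mirror form $q_n(x_1,\dots,x_n)=x_1q_{n-1}(x_2,\dots,x_n)+q_{n-2}(x_3,\dots,x_n)$ (read off from \eqref{3}, or obtained from the defining recurrence together with \eqref{4}) on the left of $q_{2m-2}(a_1,\dots,a_{2m-2})$, both become affine in $C:=q_{2m-3}(a_2,\dots,a_{2m-2})$:
\begin{align*}
q_{2m-2}(a_2,\dots,a_{2m-1})&=a_{2m-1}C+q_{2m-4}(a_2,\dots,a_{2m-3}),\\
q_{2m-2}(a_1,\dots,a_{2m-2})&=a_1C+q_{2m-4}(a_3,\dots,a_{2m-2}).
\end{align*}
Writing $X:=q_{2m-4}(a_2,\dots,a_{2m-3})$ and $Y:=q_{2m-4}(a_3,\dots,a_{2m-2})$, the hypothesis becomes $(a_{2m-1}-2a_1)\,C=2Y-X$.

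Next I bound the right-hand side. Expanding $C$ once more, from the left one gets $C=a_2Y+q_{2m-5}(a_4,\dots,a_{2m-2})\ge a_2Y\ge 2Y$ (using $a_2\ge 2$), and from the right $C=a_{2m-2}X+q_{2m-5}(a_2,\dots,a_{2m-4})\ge X+1$ (the extra $1$ coming from $q_{2m-5}\ge 1$ when $m\ge 3$, and from $a_{2m-2}=a_2\ge 2$ when $m=2$). Since every continuant occurring here is $\ge 1$, this yields $-C<2Y-X<C$; as $C\ge 1$ and $a_{2m-1}-2a_1\in\Z$, we conclude $a_{2m-1}=2a_1$ and $X=2Y$, that is,
$$q_{2m-4}(a_2,\dots,a_{2m-3})=2\,q_{2m-4}(a_3,\dots,a_{2m-2}).$$
Applying the reversal identity \eqref{4} to both sides and setting $b_j:=a_{2m-1-j}$ for $j=1,\dots,2m-3$, this becomes $q_{2(m-1)-2}(b_2,\dots,b_{2(m-1)-1})=2\,q_{2(m-1)-2}(b_1,\dots,b_{2(m-1)-2})$, which is exactly the hypothesis of the lemma with $m$ replaced by $m-1$. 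If $m=2$ this reads $1=2$, contradicting $a_2\ge 2$, hence $a_2=1$; if $m\ge 3$, the inductive hypothesis gives $b_j=1$ for some $j$, i.e.\ $a_i=1$ for some $i\in\{2,\dots,2m-2\}$. In every case $a_j=1$ for some $j\in\{1,\dots,2m-1\}$, completing the induction.

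I expect the only delicate point to be the bookkeeping with degenerate continuants when $m=2$ (where $q_{2m-4}=q_0=1$ and $q_{2m-5}=q_{-1}=0$): one must check there by hand that the inequalities $2Y\le C$ and $X<C$ still hold, which they do precisely because $a_2\ge 2$. Everything else is a routine double application of the two forms of the continuant recurrence.
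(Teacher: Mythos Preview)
Your proof is correct and follows essentially the same inductive approach as the paper: expand both continuants to isolate $q_{2m-3}(a_2,\dots,a_{2m-2})$, bound the residual by $C$ using $a_2\ge 2$, and conclude both sides vanish. Your version is in fact slightly more careful, since you make explicit the reversal via \eqref{4} (your substitution $b_j=a_{2m-1-j}$) needed to match the shape of the inductive hypothesis, and you handle the degenerate $m=2$ case separately.
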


\begin{proof}
We prove the lemma by induction on $m\in\N_+$. For $m=1$ the assumption of the implication in the statement of lemma
$1=q_0=2q_0=2$ is not satisfied, so the implication is true. Let $m>1$ and assume that $q_{2m-2}(a_2,\ldots,a_{2m-1})=2q_{2m-2}(a_1,\ldots,a_{2m-2})$. Perform the sequence of equivalent transforms of the equality being our assumption.
\begin{equation}\label{9}
\begin{split}
&q_{2m-2}(a_2,\ldots,a_{2m-1})=2q_{2m-2}(a_1,\ldots,a_{2m-2})\\
\Leftrightarrow &\ a_{2m-1}q_{2m-3}(a_2,\ldots,a_{2m-2})+q_{2m-4}(a_2,\ldots,a_{2m-3})\\
&=2a_1q_{2m-3}(a_2,\ldots,a_{2m-2})+2q_{2m-4}(a_3,\ldots,a_{2m-2})\\
\Leftrightarrow &\ q_{2m-4}(a_2,\ldots,a_{2m-3})-2q_{2m-4}(a_3,\ldots,a_{2m-2})=(2a_1-a_{2m-1})q_{2m-3}(a_2,\ldots,a_{2m-2})
\end{split}
\end{equation}

If $a_2=1$, then we are done. Hence assume that $a_2\geq 2$. If both sides of the last equality are nonzero, then the right hand side has absolute value at least equal to $q_{2m-3}(a_2,\ldots,a_{2m-2})$, which is greater than or equal to $\max\{q_{2m-4}(a_2,\ldots,a_{2m-3}),2q_{2m-4}(a_3,\ldots,a_{2m-2})\}$. Since both values $q_{2m-4}(a_2,\ldots,a_{2m-3}),2q_{2m-4}(a_3,\ldots,a_{2m-2})$ are positive, we conclude that
$$q_{2m-3}(a_2,\ldots,a_{2m-2})>|q_{2m-4}(a_2,\ldots,a_{2m-3})-2q_{2m-4}(a_3,\ldots,a_{2m-2})|,$$
so the last equality in \eqref{9} (and thus all the equalities in \eqref{9}) does not hold. Consequently, both sides of the last equality in \eqref{9} are zero. By inductive assumption for $m-1$, since $q_{2m-4}(a_2,\ldots,a_{2m-3})=2q_{2m-4}(a_3,\ldots,a_{2m-2})$, there exists $j\in\{2,\ldots,2m-3\}$ such that $a_j=1$.
\end{proof}

Now we obtain the proof of Theorem \ref{TB} in the case of the same parity of $\lfloor\sqrt{D}\rfloor$ and $D$ by combining the preceding results.

\begin{proof}[Proof of Theorem $\ref{TB}$ in the case of the same parity]
Write $$\sqrt{D}=[\lfloor\sqrt{D}\rfloor,\overline{a_1,\ldots,a_{2m-1},\lfloor\sqrt{D}\rfloor,a_{2m-1},\ldots,a_1,2\lfloor\sqrt{D}\rfloor}].$$ Since $D=[\lfloor\sqrt{D}\rfloor,\overline{a_1,\ldots,a_{2m-1},\lfloor\sqrt{D}\rfloor,a_{2m-1},\ldots,a_1,2\lfloor\sqrt{D}\rfloor}]^2$ is an integer, Corollary \ref{C3} gives us equality $q_{2m-2}(a_2,\ldots,a_{2m-1})=2q_{2m-2}(a_1,\ldots,a_{2m-2})$. By Lemma \ref{L2} this equality holds only if $a_j=1$ for some $j\in\{1,\ldots ,2m-1\}$.
\end{proof}

A more careful analysis of the above reasoning allows us to state that if $q_{2m-2}(a_2,\ldots,a_{2m-1})=2q_{2m-2}(a_1,\ldots,a_{2m-2})$, then $a_j=1$ for some $$j\in\left\{2,4,\ldots,2\left\lceil\frac{m}{2}\right\rceil-2,m,2\left\lfloor\frac{m}{2}\right\rfloor+3,\ldots,2m-5,2m-3\right\}.$$ As a result, if $p$ is a prime number and $D\in\{p,2p\}$ is such that $D\equiv\lfloor\sqrt{D}\rfloor =a\pmod{2}$, $T_D=4m$ for some $m\in\N_+$ and $\sqrt{D}=[a,\overline{a_1,\ldots,a_{2m-1},a,a_{2m-1},\ldots,a_1,2a}]$, then $a_j=1$ for some $$j\in\left\{2,4,\ldots,2\left\lceil\frac{m}{2}\right\rceil-2,m,2\left\lfloor\frac{m}{2}\right\rfloor+3,\ldots,2m-5,2m-3\right\}.$$

In view of the proof of Theorem \ref{TB} in the case of $\lfloor\sqrt{D}\rfloor\equiv D\pmod{2}$ and the discussion above we can make the statement of Theorem \ref{TB}  more specific:

\begin{thm}\label{TA2}
Let $p$ a prime number, $D\in\{p,2p\}$, and $T_D=4m$ for some $m\in\N_+$. Then $1$ appears in the period of continued fraction of $\sqrt{D}$. In particular, if $$\sqrt{D}=[a,\overline{a_1,\ldots,a_{2m-1},a,a_{2m-1},\ldots,a_1,2a}],$$ then
\begin{enumerate}
\item $a_{2m-1}=a_{2m+1}=1$ when $a\not\equiv D\pmod{2}$; 
\item $a_j=1$ for some $j\in\left\{2,4,\ldots,2\left\lceil\frac{m}{2}\right\rceil-2,m,2\left\lfloor\frac{m}{2}\right\rfloor+3,\ldots,2m-5,2m-3\right\}$ when $a\equiv D\pmod{2}$.
\end{enumerate}
\end{thm}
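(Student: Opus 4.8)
The plan is to assemble Theorem~\ref{TA2} directly from the pieces already proved in this section, treating it as a consolidated restatement rather than a fresh argument. The statement that $1$ appears in the period of $\sqrt D$ is exactly Theorem~\ref{TB}, so all that remains is to justify the two itemized refinements. For item~(1), the case $a\not\equiv D\pmod 2$ was handled in the short ``proof of Theorem~\ref{TB} in the case of different parity'': there we showed $Q_L=2$, $P_L=a_L=\lfloor\sqrt D\rfloor-1$, hence $\omega_L-a_L=\frac{\sqrt D-\lfloor\sqrt D\rfloor+1}{2}>\frac12$, giving $\omega_{L+1}<2$ and so $a_{L+1}=1$; since $L=2m$ here, $a_{L+1}=a_{2m+1}=1$, and palindromicity of $(a_1,\dots,a_{l-1})$ forces $a_{l-(2m+1)}=a_{2m-1}=1$ as well. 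So item~(1) just requires translating the indices $L\pm1$ into $2m\pm1$.

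For item~(2), the case $a\equiv D\pmod 2$, I would invoke the chain Theorem~\ref{T2} $\Rightarrow$ the continued fraction has the shape $[a,\overline{a_1,\dots,a_{2m-1},a,a_{2m-1},\dots,a_1,2a}]$ with $a\ge 3$ (the cases $a\in\{1,2\}$ being disposed of as in the text) $\Rightarrow$ Corollary~\ref{C3} gives $q_{2m-2}(a_2,\dots,a_{2m-1})=2q_{2m-2}(a_1,\dots,a_{2m-2})$ $\Rightarrow$ Lemma~\ref{L2} (applied with its $m$ equal to our $m$, since $L-2=2m-2$) produces some $a_j=1$. The only genuinely new content is the sharper index set, which the paper attributes to ``a more careful analysis of the above reasoning.'' To make that precise I would rerun the induction in Lemma~\ref{L2} bookkeeping exactly which index gets flagged at each step: in the base-type step where the right-hand side of \eqref{9} is forced to be nonzero we actually conclude $a_2=1$ (the explicit ``if $a_2=1$ we are done'' branch, with the complementary branch being impossible); otherwise both sides of the last line of \eqref{9} vanish and we recurse on $q_{2m-4}(a_2,\dots,a_{2m-3})=2q_{2m-4}(a_3,\dots,a_{2m-2})$, i.e.\ on the window shifted by one index and shrunk by two. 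Tracking the left endpoint of the window through $m\mapsto m-1$ steps yields the flagged index running through $2,4,\dots$ until the window collapses around the middle term $a_m$, after which (because the recursion now alternates which side the shift lands on) it continues through $\dots,2m-5,2m-3$; the middle term $m$ appears because when the shrinking window has length one the equality reads $a_m\cdot q_0 = 2 q_0$ forcing... actually it forces the analogue one step earlier to vanish, pinning $a_m=1$. I would present this as a labelled refinement of Lemma~\ref{L2} and then deduce item~(2).

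Concretely the write-up would be: \emph{Proof.} By Theorem~\ref{TB}, $1$ occurs in the period. If $a\not\equiv D\pmod2$, the proof of Theorem~\ref{TB} in the case of different parity shows $a_{L+1}=1$ with $L=2m$, whence $a_{2m+1}=1$, and $a_{2m-1}=1$ by palindromicity; this is item~(1). If $a\equiv D\pmod2$, then as in the proof of Theorem~\ref{TB} in the same-parity case we may assume $a\ge 3$, Corollary~\ref{C3} yields $q_{2m-2}(a_2,\dots,a_{2m-1})=2q_{2m-2}(a_1,\dots,a_{2m-2})$, and the refined version of Lemma~\ref{L2} recorded above gives $a_j=1$ for some $j$ in the stated set. $\square$

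The main obstacle is item~(2): verifying that the induction of Lemma~\ref{L2}, when run with careful index bookkeeping, delivers precisely the set $\{2,4,\dots,2\lceil m/2\rceil-2,\ m,\ 2\lfloor m/2\rfloor+3,\dots,2m-5,2m-3\}$ and not something slightly different. The delicate points are (a) confirming that at each recursion step the ``nonzero'' branch of \eqref{9} pins the \emph{smallest surviving} left-endpoint index to be $1$ (i.e.\ the ``$a_2=1$'' style conclusion rather than merely ``some $a_j=1$''), (b) correctly describing how the window's endpoints move — it loses its leftmost two arguments on one side but the reversed-symmetry structure $q_n(x_1,\dots,x_n)=q_n(x_n,\dots,x_1)$ from \eqref{4} means an equivalent step could instead be read from the right end, which is what produces the second arithmetic progression $2\lfloor m/2\rfloor+3,\dots,2m-3$, and (c) pinning down the middle value $m$ as the index flagged when the window has shrunk maximally. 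Everything else is an assembly of already-established statements.
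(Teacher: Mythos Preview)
Your proposal is correct and matches the paper's approach exactly. The paper itself presents Theorem~\ref{TA2} as a consolidated restatement: it derives item~(1) from the different-parity argument (with palindromicity giving $a_{2m-1}=1$ from $a_{2m+1}=1$), and for item~(2) it simply asserts that ``a more careful analysis of the above reasoning'' (i.e., of the induction in Lemma~\ref{L2}) yields the stated index set, without spelling out the bookkeeping---so your sketch of tracking the flagged index through the recursion $q_{2m-2}\to q_{2m-4}\to\cdots$ (alternately reversing via~\eqref{4}) is already more detailed than what the paper provides.
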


Note that one can also reason analogously in the cases of prime powers and to show the following.

\begin{thm}
Let $p$ a prime number. Let $D$ an integer of the form $p^n$ or $2p^n$, where $n$ is an odd positive integer. Assume that $2\mid T_D$. Write $T_D=2L$ for some $L\in\N_+$ and $\sqrt{D}=[\lfloor\sqrt{D}\rfloor,\overline{a_1,\ldots,a_{L-1},a_L,a_{L-1},\ldots,a_1,2\lfloor\sqrt{D}\rfloor}]$. If $\lfloor\sqrt{D}\rfloor\not\equiv D\pmod{2}$, then $a_{L-1}=1$.
\end{thm}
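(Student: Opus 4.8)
The plan is to reduce the claim, by palindromicity of the period, to the single assertion $a_{L+1}=1$, and then to copy the ``different parity'' part of the proof of Theorem~\ref{TB} once its one external ingredient has been re-established in the present generality. That ingredient is the equality $Q_L=2$, i.e.\ the analogue of Proposition~\ref{T1} with $D\in\{p,2p\}$ replaced by $D\in\{p^n,2p^n\}$, $n$ odd. I would assume throughout that $p$ is odd (if $p=2$ then $2p^n=2^{n+1}$ is a perfect square, while $D=2^n$ with $n$ odd has to be dealt with separately).

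Suppose $Q_L=2$ has been shown. Then $2\mid Q_L$ and $4\nmid Q_L$, so Proposition~\ref{P1} gives $a_L\equiv D\pmod 2$ and Proposition~\ref{P2} gives $P_L=a_L\in\{\lfloor\sqrt D\rfloor-1,\lfloor\sqrt D\rfloor\}$; the hypothesis $\lfloor\sqrt D\rfloor\not\equiv D\pmod 2$ forces $a_L=\lfloor\sqrt D\rfloor-1$. Hence
\[
\omega_L-a_L=\frac{\sqrt D+P_L}{2}-a_L=\frac{\sqrt D-\lfloor\sqrt D\rfloor+1}{2}\in\left(\tfrac12,1\right),
\]
so $\omega_{L+1}=\frac{1}{\omega_L-a_L}\in(1,2)$, whence $a_{L+1}=\lfloor\omega_{L+1}\rfloor=1$, and then $a_{L-1}=a_{L+1}=1$ by palindromicity. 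This is verbatim the argument in the different-parity case of Theorem~\ref{TB}.

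So the real work is to prove $Q_L=2$. As in Proposition~\ref{T1}, Lemma~\ref{L0} gives $Q_L\mid 4D$, and $4\mid Q_L$ is impossible since \eqref{2.2} would then force $4\mid D$; because $p$ is odd this leaves $Q_L\mid 2p^n$, and $Q_L=1$ is ruled out by the periodicity argument used there. It remains to exclude $p\mid Q_L$. Assume $p\mid Q_L$. From $Q_L\mid D-P_L^2$ and $p\mid D$ we get $p\mid P_L$, and from $P_{L+1}=P_L$ and \eqref{1} we get $2P_L=a_LQ_L$, so $v_p(Q_L)\le v_p(P_L)$, where $v_p$ is the $p$-adic valuation. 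I would then prove by downward induction on $k=L,L-1,\dots,1$ the invariant
\[
p\mid Q_k,\qquad p\mid P_k,\qquad v_p(Q_k)<2\,v_p(P_k),
\]
which holds at $k=L$ by the above (note $v_p(P_L)\ge1$). The engine of the induction is the bound $0<P_k<\sqrt D$: since $n$ and $p$ are odd it gives $v_p(P_k)\le\frac{n-1}{2}$, hence $2v_p(P_k)\le n-1<n=v_p(D)$ and therefore $v_p(D-P_k^2)=2v_p(P_k)$. Plugging this into $Q_{k-1}Q_k=D-P_k^2$ (a rewriting of \eqref{2}) yields $v_p(Q_{k-1})=2v_p(P_k)-v_p(Q_k)\ge1$, so $p\mid Q_{k-1}$; then $P_{k-1}=a_{k-1}Q_{k-1}-P_k$ (from \eqref{1}) gives $p\mid P_{k-1}$, and a short case check on $v_p(a_{k-1}Q_{k-1})$ versus $v_p(P_k)$ restores $v_p(Q_{k-1})<2v_p(P_{k-1})$. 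At $k=1$ the identity $Q_0Q_1=D-P_1^2$ together with the same valuation bound forces $v_p(Q_0)\ge1$, contradicting $Q_0=1$. Thus $p\nmid Q_L$, so $Q_L\in\{1,2\}$ and hence $Q_L=2$.

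The main obstacle I expect is exactly this induction. The naive invariant ``$p\mid Q_k$'' does not propagate — a priori $v_p(Q_k)$ can exceed $v_p(P_k)$, so $v_p(Q_{k-1})=2v_p(P_k)-v_p(Q_k)$ need not be positive — and one has to isolate a numerical invariant strong enough to survive passage through \eqref{1}--\eqref{2} yet weak enough to hold at the starting index; $v_p(Q_k)<2v_p(P_k)$ is my candidate. Apart from that the proof only recycles material from Sections~\ref{sec:2} and~\ref{sec:4}, and the oddness of $p$ is used solely to guarantee $4\nmid D$ and $2v_p(P_k)<v_p(D)$.
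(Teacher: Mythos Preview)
Your overall architecture is the paper's: once $Q_L=2$ is known, Propositions~\ref{P1} and~\ref{P2} pin down $P_L=a_L=\lfloor\sqrt D\rfloor-1$, and the computation $\omega_{L+1}=1/(\omega_L-a_L)<2$ gives $a_{L+1}=1$, hence $a_{L-1}=1$ by palindromicity. The paper does exactly this, the only difference being that it does not prove $Q_L=2$ but simply invokes the proof of \cite[Satz~21, p.~108]{Pe}, which already covers $D=p^n,2p^n$.

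Your contribution is therefore the self-contained argument for $Q_L=2$ via the $p$-adic invariant $v_p(Q_k)<2v_p(P_k)$. I checked the downward induction and it goes through: the crucial point is that $0<P_k<\sqrt{D}$ together with $n$ odd and $p\ge 3$ forces $2v_p(P_k)\le n-1<v_p(D)$, so $v_p(Q_{k-1})=2v_p(P_k)-v_p(Q_k)\in[1,\,2v_p(P_k)-1]$, and then $P_{k-1}=a_{k-1}Q_{k-1}-P_k$ gives $v_p(P_{k-1})\ge\min\bigl(v_p(a_{k-1})+v_p(Q_{k-1}),\,v_p(P_k)\bigr)$, from which $v_p(Q_{k-1})<2v_p(P_{k-1})$ follows in every case. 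At $k=1$ one gets $v_p(Q_0)\ge 1$, contradicting $Q_0=1$. So your candidate invariant is the right one and the ``main obstacle'' you anticipate does not materialize.

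Your caution about $p=2$ is well placed and in fact sharper than the paper's silence on the matter: for $D=128=2^7$ one computes $T_D=4$, $L=2$, $Q_L=4$, $\lfloor\sqrt D\rfloor=11\not\equiv 0\pmod 2$, yet $a_{L-1}=3$. So the statement (and Perron's $Q_L=2$) genuinely requires $p$ odd; your proof uses oddness of $p$ precisely where it is needed, namely to get $4\nmid D$ and $p^{1/2}>\sqrt 2$.
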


\begin{proof}
By the proof of \cite[Satz 21, page 108]{Pe}, we again have that $Q_L=2$. Then, Proposition \ref{P2} gives us that $P_L=a_L\in\{\lfloor\sqrt{D}\rfloor,\lfloor\sqrt{D}\rfloor-1\}$. Since $\lfloor\sqrt{D}\rfloor\not\equiv D\pmod{2}$, we get from Proposition \ref{P1} that $P_L=a_L=\lfloor\sqrt{D}\rfloor-1$. Then 
$$\omega_L-a_L=\frac{\sqrt{D}-\lfloor\sqrt{D}\rfloor +1}{2}>\frac{1}{2}.$$
As $\omega_{L+1}=\frac{1}{\omega_L-a_L}<2$, we have $a_{L+1}=\lfloor\omega_{L+1}\rfloor =1$.
\end{proof}

The above result together with Theorem \ref{T0} shows that if $1$ does not appear as a partial quotient in continued fraction expansion of square root of prime number $p\equiv 3\pmod{8}$, then $\lfloor\sqrt{p}\rfloor$ is odd and the middle partial quotient in the period of continued fraction of $\sqrt{p}$ is equal to $\lfloor\sqrt{p}\rfloor$. This fact is confirmed by the numerical data contained in \cite[Table 4]{MiUl}.

Let us recall that if $p$ is a prime number congruent to $3$ modulo $8$, then $T_p=2L$ for some odd $L\in\N_+$. In the case $p\equiv 3\pmod{8}$ the reasoning in the proof of Theorem \ref{TB} is valid up to Corollary \ref{C3} included. Only the statement of Lemma \ref{L2} may be false. It is possible in the case of odd $L$ that $q_{L-2}(a_2,\ldots,a_{L-1})=2q_{L-2}(a_1,\ldots,a_{L-2})$ but $a_j>1$ for all $j\in\left\{1,\ldots,L-1\right\}$. Analogous reasoning to this one from the proof of Lemma \ref{L2} shows that $q_{L-2}(a_2,\ldots,a_{L-1})=2q_{L-2}(a_1,\ldots,a_{L-2})$ if $\frac{3-(-1)^k}{2}a_k=\frac{3+(-1)^k}{2}a_{L-k}$ for each $k\in\left\{1,\ldots,\frac{L-1}{2}\right\}$. In other words, $q_{L-2}(a_2,\ldots,a_{L-1})=2q_{L-2}(a_1,\ldots,a_{L-2})$ if $2a_k=a_{L-k}$ for each odd $k\in\left\{1,\ldots,\frac{L-1}{2}\right\}$ and $a_k=2a_{L-k}$ for each even $k\in\left\{1,\ldots,\frac{L-1}{2}\right\}$. This phenomenon can be also observed in \cite[Table 4]{MiUl}.

\section{Proof of Theorem \ref{th:period length}}\label{sec:5}

In this section we estimate the period length, giving a partial answer to \cite[Question 5.1]{MiUl}.

\begin{proof}[Proof of Theorem $\ref{th:period length}$]
We will use the class number formula 
$$h=\frac{\sqrt D}{\log \varepsilon}L(1,\chi),$$
where $h$ and $\varepsilon>1$ are the class number and fundamental unit of the real quadratic field $\Q(\sqrt D)$, respectively.
For the class number, we will just use the trivial estimate $h\geq 1$.

For the fundamental unit, first let $p_i/q_i=[a_0,a_1,\dots,a_i]$ denote the convergents to $\sqrt D$ and define $\alpha_i=p_{i}+q_{i}\sqrt D$. A well-known fact then says that $\alpha_{l-1}$ is a unit in $\Q(\sqrt D)$ that equals $\ve$ or $\ve^2$. Also note that, in the notation of Section \ref{sec:2}, $p_i=q_{i+1}(a_0,a_1,\dots,a_i)$ and $q_i=q_{i}(a_1,\dots,a_i)$.

To estimate $\alpha_{l-1}$, we use the recurrences $\alpha_{i+1}=a_{i+1}\alpha_i+\alpha_{i-1}$ that can be extended to hold with the initial conditions $\alpha_0=a_0+\sqrt D, \alpha_{-1}=1$. 

Similarly as, e.g., in \cite[Lemma 8]{DK}, using the recurrence twice, we get 
$$\alpha_{i+1}=a_{i+1}\alpha_i+\alpha_{i-1}=(a_{i+1}a_i+1)\alpha_{i-1}+a_{i+1}\alpha_{i-2}
> (a_{i+1}a_i+1)\alpha_{i-1}\geq 2\alpha_{i-1}$$
(it is easy to check that the estimate holds also for $i=0$).

Thus $\alpha_{l-1}>2\alpha_{l-3}>\dots>
\begin{cases}
  2^{(l-1)/2}\alpha_0 &  \text{ if } l \text{ is odd,} \\
  2^{l/2}\alpha_{-1}  &  \text{ if } l \text{ is even.}
\end{cases}
$

In both cases, we get $\alpha_{l-1}>2^{l/2}$.

Thus 
$\log\ve\geq \frac12\log\alpha_{l-1}>l\frac{\log2}{4}$, and so $$l<\frac 4{\log 2}\log\ve\leq \frac 4{\log 2}\sqrt D \cdot L(1,\chi).$$

Finally, for the $L$-value, we have the well-known bounds $L(1,\chi)\ll\log D$ (unconditionally) and $L(1,\chi)\ll \log\log D$ (under GRH), see, e.g., \cite{GS}.
\end{proof}

As we noted in the Introduction, similar results are known, e.g., \cite{Pod,Poh,Sa}. Nevertheless, we included the proof here for completeness of the answer to Question 5.1 in \cite{MiUl}.

This question asked for estimates of the period length $T_{p_m}$, where $p_m$ is the $m$th prime. As $p_m \sim m\log m$ by the prime number theorem, we immediately get from Theorem \ref{th:period length} above that 
 $$T_{p_m}\ll
\begin{cases}
  m^{1/2}(\log m)^{3/2}  &  \text{ unconditionally}, \\
  m^{1/2}(\log m)^{1/2}\log\log m  &  \text{ assuming GRH}.
\end{cases}$$

Thus it seems that unconditionally answering part 1. of the question (that asks whether always $T_{p_m}<m^{1/2}\log m$) will be very hard, as it corresponds to establishing an unconditional upper bound $L(1,\chi)\ll\sqrt{\log p}$.

Assuming GRH, one can use an explicit form of Littlewood's bound 
\cite[Theorem 1.5]{LLS}. As $\chi$ is a primitive character modulo $D$ or $4D$, plugging in this bound to our Theorem \ref{th:period length} and simplifying, we get
$$T_D<21\sqrt D(\log\log (4D)+0.2)$$
for $D>10^{10}$.
If we let $D=p_m$ the $m$th prime, we can use the explicit upper bound $p_m<m(\log m+\log\log m)$ valid for $m\geq 6$ \cite[Corollary to Theorem 3]{RS}. Comparing the resulting bound on $T_{p_m}$ with \cite[Question 5.1, part 1.]{MiUl}, we see that if $m>10^{24000}$, then the conjectured bound indeed holds. Unfortunately, this bound is so large that computationally checking what happens for all smaller values of $m$ is most likely impossible.

Of course, one could try to improve the other estimates used in the proof of Theorem \ref{th:period length}. But $\alpha_{l-1}>2^{l/2}$ is essentially sharp (when all the partial quotient $a_1,\dots,a_{l-1}$ are small), and in such a case it can most likely also happen that $D$ is prime (for $D=D(t)$ will be given by a quadratic polynomial that conjecturally attains prime values, cf. \cite[Proposition 6]{DK}, \cite[Theorem 3.3]{MiUl}, and Section \ref{factorpoly}), as well as that the class number is small (when the parameter $t$ is small).

\

Part 2 of \cite[Question 5.1]{MiUl} asks about the value
$$\limsup_{m\to\infty}\frac{T_{p_m}}{m^{1/2}\log m}.$$
Here again we cannot say anything unconditionally, but we see that our estimate under GRH immediately implies that the $\limsup$ equals 0.

\section{Factorization of continued fraction polynomials}\label{factorpoly}

Let us now focus on ``continued fraction families (or polynomials)'': Let $l\geq 1$ and fix a symmetric sequence of positive integers $a_1,a_2,\dots,a_{l-1}=a_1$ (when $l=1$, then the sequence is empty).

Friesen \cite{Fr} and Halter-Koch \cite{HK} determined conditions under which there exist (squarefree) integers $D$ such that $\sqrt{D}=[a_0,\overline{a_1,\ldots,a_l}]$ for some $a_0$ and $a_l=2a_0$. It turns out that these conditions depend only on certain parities of expressions related to the integers $q_i(a_1,a_2,\dots,a_i)$. 

To be precise, 
define the positive integers $E, F, G$ by the matrix equation
\begin{equation}\label{eq:EFG}
\left[ {\begin{array}{cc}
    E & F \\
    F & G \\
  \end{array} } \right]=
  \prod_{i=1}^{l-1} \left[ {\begin{array}{cc}
    a_i & 1 \\
    1 & 0 \\
  \end{array} } \right].
\end{equation} 
  As in identity \eqref{3}, we have $E=q_{l-1}(a_1,a_2,\dots,a_{l-1}), F=q_{l-2}(a_1,a_2,\dots,a_{l-2})$ and 
$G=q_{l-3}(a_2,a_3,\dots,a_{l-2})$.
 
With this notation, Friesen's theorem says:

\begin{thm}[{\cite[Theorem]{Fr}}]\label{th:friesen}
Let $l\geq 1$ and fix a symmetric sequence of positive integers $a_1,a_2,\dots,a_{l-1}=a_1$. Let $E,F,G$ as above.
There are infinitely many squarefree integers $D>1$ such that $\sqrt{D}=[a_0,\overline{a_1,\ldots,a_l}]$ for some $a_0$ and $a_l=2a_0$ if and only if $F$ or $G$ is even. Furthermore, all such $D$ are of the form $D=D(t)=at^2+bt+c$ for $t\geq t_0$, where
\begin{enumerate}
    \item $a=E^2, b=2F-(-1)^lEFG, c=(F^2/4-(-1)^l)G^2$
    if $F$ is even and $E,G$ are odd, 
    or $E,F$ are odd and $G$ is even;
    \item $a=E^2/4, b=F-(-1)^lEFG/2, c=(F^2/4-(-1)^l)G^2$ if $F$ is odd and $E, G$ are even;
\end{enumerate}
and $t_0=\lfloor(-1)^lFG/E \rfloor +1$. Then $a_0=\left(Et-FG\right)/2$.
\end{thm}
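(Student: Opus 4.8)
The plan is to exploit the classical dictionary between the expansion $\sqrt D=[a_0,\overline{a_1,\dots,a_{l-1},2a_0}]$ and the purely periodic quadratic irrational $\alpha:=\sqrt D+a_0$, and then to convert the arithmetic of the matrix in \eqref{eq:EFG} into the quadratic polynomial $D(t)$. First I would note that $\alpha=\sqrt D+a_0$ satisfies $\alpha>1$ and $\overline\alpha=a_0-\sqrt D\in(-1,0)$ exactly when $a_0=\lfloor\sqrt D\rfloor$, so by Galois' theorem (already used in the proof of Lemma \ref{L0}) $\alpha$ is purely periodic, and its expansion is $[\overline{2a_0,a_1,\dots,a_{l-1}}]$ precisely when $\sqrt D=[a_0,\overline{a_1,\dots,a_{l-1},2a_0}]$. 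Multiplying out $\left[\begin{smallmatrix}2a_0&1\\1&0\end{smallmatrix}\right]\left[\begin{smallmatrix}E&F\\F&G\end{smallmatrix}\right]$ via \eqref{3} and \eqref{eq:EFG} shows that $\alpha$ is a root of $E\alpha^2-2a_0E\alpha-(2a_0F+G)=0$; substituting $\alpha=\sqrt D+a_0$ and comparing rational and irrational parts yields
\[
D=a_0^2+\frac{2a_0F+G}{E},\qquad\text{equivalently}\qquad E^2D=(Ea_0+F)^2+(-1)^{l-1},
\]
the second form using $EG-F^2=(-1)^{l-1}$ (take determinants in \eqref{eq:EFG}; cf.\ \eqref{8}).

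Conversely, given a palindromic sequence and any positive integer $a_0$ with $E\mid 2a_0F+G$, I would set $D:=a_0^2+\tfrac{2a_0F+G}{E}\in\Z$ and verify it works: a short check rules out $D$ being a perfect square, and $\lfloor\sqrt D\rfloor=a_0$ because $0<\tfrac{2a_0F+G}{E}<2a_0+1$; this last inequality uses the monotonicity of the $q_n$ in the number of arguments (so $F<E$ and $G<E$) together with a lower bound on $a_0$, and it is exactly that lower bound which produces the threshold $t_0$. Running the continued fraction algorithm on $\alpha=\sqrt D+a_0$ — or applying Galois again to the fixed point of the matrix above — then returns $[\overline{2a_0,a_1,\dots,a_{l-1}}]$, so $\sqrt D=[a_0,\overline{a_1,\dots,a_{l-1},2a_0}]$. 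Hence the admissible $D$ are governed entirely by the congruence $E\mid 2a_0F+G$.

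Next I would analyse that congruence. From $EG-F^2=(-1)^{l-1}$ one gets $\gcd(E,F)=1$ and $F^2\equiv(-1)^l\pmod E$, so $E\mid 2a_0F+G$ is equivalent to $2a_0\equiv-(-1)^lFG\pmod E$. If $E$ is odd, this is always solvable and fixes $a_0$ to one residue class mod $E$ (Case~(1); there $FG$ is even, so the solutions are honest integers). If $E$ is even then $F$ is odd, and the congruence is solvable only if $2\mid FG$, i.e.\ only if $G$ is even; if instead $G$ is odd — so $F$ and $G$ are both odd — no $a_0$ exists at all, which is precisely the ``$F$ or $G$ even'' obstruction, while for $G$ even the solutions form one class mod $E/2$ (Case~(2)). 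In each admissible case the solutions are an arithmetic progression $a_0=a_0(t)$, linear in a free parameter $t$ (of the shape $2a_0=Et-FG$, up to a sign depending on the parity of $l$), with $t_0$ the least $t$ giving $a_0(t)\ge 1$; substituting into $E^2D=(Ea_0+F)^2+(-1)^{l-1}$ and expanding with $EG=F^2+(-1)^{l-1}$ produces $D=D(t)=at^2+bt+c$ with the stated coefficients — in particular $a=E^2$ when $E$ is odd and $a=E^2/4$ when $E$ is even. Finally, to get infinitely many \emph{squarefree} such $D$ I would invoke the standard fact that an integer quadratic polynomial with no fixed square divisor takes squarefree values infinitely often (a sieve using $\sum_p p^{-2}<\infty$); the hypothesis — that no prime $p$ has $p^2\mid D(t)$ for all $t$ — follows from $E^2D(t)=(Ea_0(t)+F)^2+(-1)^{l-1}$, since $p^2\mid D(t)$ for all $t$ would force $p\mid E$ and then $p\mid\pm1$.

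I expect the main obstacle to be the bookkeeping in the last paragraph: determining the exact coefficients $a,b,c$ and the precise value of $t_0$, since one must simultaneously keep track of the sign $(-1)^l$, the two parity regimes for $E$, and the requirement that $a_0(t)$ land on genuine positive integers in the right residue class. A milder subtlety is in the converse direction — showing the constructed $D$ really has period $l$ and not a proper divisor of it; this can be sidestepped by noting that the statement only claims $\sqrt D$ can be \emph{written} as $[a_0,\overline{a_1,\dots,a_l}]$, so a non-primitive period is harmless. The remaining ingredients — the Galois argument, the matrix identities of Section \ref{sec:2}, and the elementary congruence analysis — are routine.
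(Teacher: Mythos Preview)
The paper does not prove Theorem~\ref{th:friesen}; it is quoted verbatim from Friesen \cite{Fr} (with only a cosmetic reformulation replacing $(F^2-(-1)^l)/E$ by $G$), so there is no in-paper argument to compare against. Your outline is essentially the standard proof one would give and is sound: the reduction to the purely periodic $\alpha=\sqrt D+a_0$, the matrix computation yielding $E^2D=(Ea_0+F)^2+(-1)^{l-1}$, and the congruence $2a_0\equiv-(-1)^lFG\pmod E$ are all correct and are exactly how Friesen's result is usually derived.

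Two remarks. First, your sidestep of the ``period exactly $l$ vs.\ a proper divisor of $l$'' issue is a bit too quick: the way the paper uses this theorem (and Friesen's original statement) really does want the minimal period, since otherwise the parametrisation $D=D(t)$ would overlap across different $l$. The fix is easy, though --- if the period were a proper divisor $l'$ of $l$, the same matrix argument applied to the length-$l'$ product would force a smaller linear relation on $a_0$, contradicting minimality of $t_0$ for generic $t$; alternatively one checks directly that $Q_k\neq 1$ for $0<k<l$. Second, your squarefree-values step is correct in spirit but the no-fixed-square-divisor verification is slightly glib: from $E^2D(t)=(Ea_0(t)+F)^2\pm1$ you should argue that any prime $p$ with $p^2\mid D(t)$ for all $t$ must divide $\gcd(a,b,c)$, and then chase through the explicit $a,b,c$ using $\gcd(E,F)=1$. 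Neither point is a genuine gap --- both are precisely the ``bookkeeping'' you already flagged as the main obstacle.
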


Note that $F^2-EG=(-1)^l$ thanks to \eqref{eq:EFG}. Thus although Friesen stated his condition in terms of $(F^2-(-1)^l)/E$, this equals $G$ that we use in our formulation.

Further, the equality $F^2-EG=(-1)^l$ allows us to determine the possible parities above: for example, in case (1), if $F$ is even, then $E,G$ have to be odd as stated. 

It is also worth remarking that the discriminant of $D(t)$ as a quadratic polynomial is $(-1)^l\cdot 4$ in case (1), and $(-1)^l$ in case (2).

Note that, unfortunately, the reformulation of the previous theorem given by Dahl and the first author as \cite[Proposition 6]{DK} contains an error: In the statement of the proposition, there should not be $q_{s-3}$, but $G=q_{s-3}(a_2,a_3,\dots,a_{s-2})$ as defined above (\cite{DK} denotes the period length $s$ instead of $l$ in the present paper). This misprint has no impact on the rest of the paper \cite{DK}.

\

Inspired by the factorization of \cite[Theorem 3.3]{MiUl}, we can similarly -- and more explicitly -- factor $D(t)$ when the period length $l$ is even. Also note that \cite[Lemma 3b)]{RipTay} gives an equivalent factorization as our theorem (but does not consider what happens in these families).

\begin{thm}\label{factor}
In the setting of Theorem $\ref{th:friesen}$, assume moreover that $l$ is even. Let cases $(1)$ and $(2)$ below as in Theorem $\ref{th:friesen}$. Further let
\begin{enumerate}
\item $E_{\pm}=(F\pm 1, E)$, $G_{\pm}=(F\pm 1, G)$, and
\item $2E_{\pm}=(F\pm 1, E)$, $2G_{\pm}=(F\pm 1, G)$.
\end{enumerate}

Then we have the following factorizations into products of integers:
\begin{enumerate}
\item $D=D(t)=\left(E_+^2t-G_-^2(F+2)/2\right)\left(E_-^2t-G_+^2(F-2)/2\right)$,
\item $D=D(t)=\left(E_+^2t-G_-^2(F+2)\right)\left(E_-^2t-G_+^2(F-2)\right)$.
\end{enumerate}

When $t\geq t_0$, then both parentheses in the factorization are positive integers.

Moreover, if $D=D(t)$ is prime, then $t=t_0$ attains the smallest possible value.
\end{thm}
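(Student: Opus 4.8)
The plan is to first establish the factorization itself, and then deduce the primality statement as a short corollary. For the factorization, I would start from the explicit quadratic $D(t)=at^2+bt+c$ given in Theorem~\ref{th:friesen} and verify directly that the claimed product equals it, by expanding the product of the two linear factors and matching coefficients of $t^2$, $t^1$, $t^0$ against $a,b,c$. The identities I expect to need are $F^2-EG=(-1)^l=1$ (since $l$ is even), together with the fact that $E_+E_-=E$ and $G_+G_-=G$ in case~(1) (respectively $2E_+\cdot 2E_-=E$, etc., in case~(2)); the latter follow because $\gcd(F+1,F-1)\mid 2$ and, after a parity analysis using $F^2-EG=1$, the two gcds $(F\pm1,E)$ multiply to exactly $E$. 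Checking the $t^2$-coefficient amounts to $E_+^2E_-^2=E^2$, which is immediate; the $t^0$-coefficient amounts to $G_-^2G_+^2(F+2)(F-2)/4=(F^2/4-1)G^2$, i.e.\ $G^2(F^2-4)/4$, which matches; the cross term in $t$ is the one real computation, and it should collapse to $b$ after using $EG=F^2-1=(F-1)(F+1)$ to rewrite $E_+^2G_+^2+E_-^2G_-^2$ in terms of $E,F,G$.

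For the positivity claim when $t\ge t_0$, I would argue that since $D(t)>0$ for $t\ge t_0$ (as $\sqrt{D(t)}$ is a genuine square root with the stated continued fraction), the product of the two linear factors is positive, so they have the same sign; then I would check that the first factor $E_+^2t-G_-^2(F+2)/2$ is positive at $t=t_0$ by a direct estimate, using $t_0=\lfloor FG/E\rfloor+1>FG/E$ (recall $(-1)^l=1$) and $E_+\mid E$, $G_-\mid G$, so that $E_+^2t_0>E_+^2FG/E\ge E_+^2 FG/E$, and comparing with $G_-^2(F+2)/2$ via $E_+ G_+ \le EG$-type inequalities. Both factors are integers because $E_\pm,G_\pm$ are defined as gcd's (hence integers) and the parity conditions of the two cases guarantee that $F\pm2$ and, in case~(1), the division by $2$, come out integral.

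Finally, the primality statement: if $D(t)$ is prime, then in the factorization $D(t)=X(t)Y(t)$ with $X,Y$ positive integers, one of them must equal $1$. Since both $X(t)=E_+^2t-G_-^2(F+2)/2$ and $Y(t)=E_-^2t-G_+^2(F-2)/2$ are strictly increasing linear functions of $t$ with positive leading coefficients $E_\pm^2\ge1$, and since they are positive for all $t\ge t_0$, the value $1$ can be attained by at most finitely many $t$, and in fact the smaller-indexed value forces $t$ to be as small as possible; more precisely, I would show that for $t>t_0$ both $X(t)$ and $Y(t)$ are already $\ge2$ (using that they are $\ge1$ at $t_0$ and increase by at least $E_\pm^2\ge1$ per unit step, together with a check that strict inequality $\ge2$ holds once $t\ge t_0+1$), so a prime value is only possible at $t=t_0$. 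The one point requiring a little care is ruling out the degenerate possibility $E_\pm=1$ making the increments too small to jump past $1$; here I would observe that if, say, $E_+=1$ then $X(t)=t-G_-^2(F+2)/2$ increases by exactly $1$ each step, but then $X(t_0)=1$ would already be forced and $X(t)\ge2$ for $t\ge t_0+1$, which still gives the conclusion.

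The main obstacle I anticipate is the bookkeeping in the two cases simultaneously — keeping straight which of $E,F,G$ are even and which odd (dictated by $F^2-EG=1$), and correctly handling the factor of $2$ in case~(2) versus case~(1) — rather than any deep idea; the algebraic identity verifying the factorization is mechanical once the gcd-multiplicativity $E_+E_-=E$, $G_+G_-=G$ is in hand, and that multiplicativity is the one genuinely non-formal input, resting on the coprimality structure forced by $F^2-EG=1$.
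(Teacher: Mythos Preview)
Your plan is essentially the same as the paper's proof: establish the multiplicativity identities $E=E_+E_-$, $G=G_+G_-$, $F\pm1=E_\pm G_\pm$ (case~(1)) from the coprimality of $F+1$ and $F-1$ forced by $F^2-EG=1$, expand the product and match against $D(t)$, check positivity directly, and deduce the primality statement from the strict monotonicity of the two linear factors. One slip in your case~(2) bookkeeping: you wrote $2E_+\cdot 2E_-=E$, but since $\gcd(F+1,F-1)=2$ when $F$ is odd, the correct identities are $E=2E_+E_-$, $G=2G_+G_-$, $F\pm1=2E_\pm G_\pm$ (these are exactly what the paper records); once that factor of $2$ is placed correctly, the rest of your outline goes through unchanged.
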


\begin{proof}
The factorizations are straightforward to verify by expanding and simplifying using $F^2-EG=1$:
In case (1), $F$ is even, and so $F-1$ and $F+1$ are coprime. Thus $(F-1)(F+1)=EG$ implies that $F\pm 1=E_\pm G_\pm$ and $E=E_-E_+, G=G_-G_+$. Using this, one just expands the factorization and checks that it equals $D(t)$ from Theorem \ref{th:friesen}.

Similarly in case (2), except that we have
$F\pm 1=2E_\pm G_\pm$ and $E=2E_-E_+, G=2G_-G_+$

Also the positivity of the parentheses is easily checked directly with the use of identities $F\pm 1=E_\pm G_\pm$ and $E=E_-E_+, G=G_-G_+$ ($F\pm 1=2E_\pm G_\pm$ and $E=2E_-E_+, G=2G_-G_+$, respectively).

If $D$ is prime, then one of the parentheses equals 1, and the other one $>1$. But this can happen only for the smallest possible value $t=t_0$, as both parentheses are strictly increasing functions of $t$.
\end{proof}

One can also directly compute that the possible prime value equals:
\begin{enumerate}
    \item $\frac{E_+^2-2}{E_-^2}$ or $\frac{E_-^2+2}{E_+^2}$;
    \item $\frac{E_+^2-1}{E_-^2}$ or $\frac{E_-^2+1}{E_+^2}$.
\end{enumerate}
Since a square of an integer is congruent to $0$, $1$ or $4$ modulo $8$, it is easy to compute that if $\frac{E_\mp^2\pm c}{E_\pm^2}$, $ c\in\{1,2\}$, is a prime number, then
\begin{itemize}
    \item $\frac{E_+^2-2}{E_-^2}\equiv 7\pmod{8}$;
    \item $\frac{E_-^2+2}{E_+^2}\equiv 3\pmod{8}$;
    \item $\frac{E_-^2+1}{E_+^2}\equiv 1\pmod{4}$.
\end{itemize}
Note that for a given integer $D$, to be of the form $\frac{x^2- c}{y^2}$, where $x,y, c\in\Z$, is equivalent to solubility of Pell-type equation $x^2-Dy^2= c$ in integers $x,y$. This corresponds with the facts \cite[S\"{a}tze 22--24]{Pe} that if $p$ is a prime number, then
\begin{itemize}
    \item $x^2-py^2=2$ has a solution in integers $x,y$ if and only if $p\equiv 7\pmod{8}$;
    \item $x^2-py^2=-2$ has a solution in integers $x,y$ if and only if $p\equiv 3\pmod{8}$;
    \item $x^2-py^2=-1$ has a solution in integers $x,y$ if and only if $p\equiv 1\pmod{4}$.
\end{itemize}

Recall that if $\sqrt{D}=[a_0,\overline{a_1,\ldots,a_l}]$, then we write $\omega_k=[\overline{a_k,\ldots,a_l,a_1,\ldots,a_{k-1}}]$ for $k\in\N_+$. We already know that for each $k\in\N_+$ we have $\omega_k=\frac{\sqrt{D}+P_k}{Q_k}$ for some $P_k,Q_k\in\N_+$ such that $P_k^2<p$ and $Q_k\mid D-P_k^2$. The reasoning in \cite[page 102]{Pe} shows that $(-1)^{k}Q_k$, $k\in\N$, is represented as a value of binary quadratic form $x^2-Dy^2$ for some integral $x$ and $y$ (e.g., by taking $x=q_{k+1}(a_0,a_1,a_2,\ldots,a_k)$ and $y=q_{k}(a_1,a_2,\ldots,a_k)$). Assuming that $D=p$ is a prime number, by Corollary \ref{C2}a) we know that $2\nmid l$ if and only if $p\equiv 1\pmod{4}$. Hence, if $p\equiv 1\pmod{4}$, putting $k=l$ we obtain that the equation $x^2-py^2=-1$ has a solution in integers $x,y$. In the case of $p\equiv 3\pmod{4}$ we have $l=2L$ for some $L\in\N_+$. As we have proved in Proposition \ref{T1}, $Q_L=2$. This fact combined with Corollary \ref{C2}b) gives us that if $p\equiv 5\pm 2\pmod{8}$, then the equation $x^2-py^2=\pm 2$ has a solution in integers $x,y$.

Since $l$ in the assumption of Theorem \ref{factor} is even and a prime number of the form $\frac{E_-^2+1}{E_+^2}$ is congruent to $1$ modulo $4$, Corollary \ref{C2} allows us to conclude that the equation $E_+^2t_0-G_-^2(F+2)=1$ does not lead to prime value of $D=D(t_0)=\left(E_+^2t_0-G_-^2(F+2)\right)\left(E_-^2t_0-G_+^2(F-2)\right)=\frac{E_-^2+1}{E_+^2}.$ Thus, if $D(t_0)$ is a prime number in the case (2) of Theorem \ref{factor}, then $E_-^2t_0-G_+^2(F-2)=1$ and $D(t_0)=\frac{E_+^2-1}{E_-^2}$.

There is also another restriction for the value of $D(t_0)$ to be prime. According to Theorems \ref{th:friesen} and \ref{factor} we get $a_0=(Et_0-FG)/2$ while Theorem \ref{T0} gives us a relation $a_L\leq a_0\leq a_L+1$. As a result, if $D(t_0)$ is a prime number, then $a_L\leq (Et_0-FG)/2\leq a_L+1$, where $E,F,G$ and $t_0$ are expressions depending on $a_1,\ldots,a_L$.

\section*{Acknowledgments}
We are grateful to the anonymous referee for the remarks that improved the exposition of the paper. We thank Maciej Ulas for several useful comments, in particular for suggesting that it is possible to include also the case $2p$ in Theorems \ref{TA}, \ref{TB}, and to Giacomo Cherubini for pointing out several missing references.


\begin{thebibliography}{WWW}

\bibitem[CS]{CS} D. Chakraborty and A. Saikia, \textit{On a conjecture of Mordell}, Rocky Mountain J. Math. \textbf{49} (2019), 2545--2556.

\bibitem[CF+]{CF+} G. Cherubini, A. Fazzari, A. Granville, V. Kala, P. Yatsyna, \textit{Consecutive real quadratic fields with large class numbers}, Int. Math. Res. Not. IMRN (to appear), \url{https://doi.org/10.1093/imrn/rnac176}.

\bibitem[DK]{DK}
A.~Dahl, V.~Kala,
\emph{Distribution of class numbers in continued fraction families of real quadratic fields},
Proc.~Edinb.~Math.~Soc.~(2) \textbf{61} (2018), no.~4, 1193--1212. 

\bibitem[DCS]{DCS}  S. Das, D. Chakraborty and A. Saikia, \textit{On the period of the continued fraction of $\sqrt{pq}$}, Acta Arith. \textbf{196} (2020), 291--302.

\bibitem[Fr]{Fr} C. Friesen, \emph{On continued fractions of given period}, Proc. Amer. Math. Soc. \textbf{103} (1988), 8--14.

\bibitem[Gol]{Gol} E. P. Golubeva, \textit{Quadratic irrationals with fixed period length in the continued fraction expansion}, J. Math. Sci. 70 (1994), 2059–-2076.

\bibitem[GS]{GS} A. Granville, K. Soundararajan, 
\textit{Upper bounds for $|L(1,\chi)|$},
Q. J. Math. \textbf{53} (2002), 265--284. 


\bibitem[HK]{HK} F. Halter-Koch, \emph{Continued fractions of given symmetric period}, Fibonacci Quart. \textbf{29} (1991), 298--303.

\bibitem[Kuz]{Kuz} R. Kuzmin, \textit{Ob odnoi zadache Gaussa}, Doklady Akad. Nauk, Ser. A. (1928), 375–-380.

\bibitem[LLS]{LLS}
Y. Lamzouri, X. Li, K. Soundararajan, \textit{Conditional bounds for the least quadratic non-residue and related problems}, Math. Comp. \textbf{84} (2015), 2391--2412.

\bibitem[Lo1]{Lou20} S. Louboutin, \textit{On the continued fraction expansions of $\sqrt{p}$ and $\sqrt{2p}$ for primes $p \equiv 3 \pmod{4}$}, Class groups of Number fields and related topics (2020), 175--178.

\bibitem[Lo2]{Lou} S. Louboutin, \textit{On the continued fraction expansions of $\frac{1+\sqrt{pq}}{2}$ and $\sqrt{pq}$}, C. R. Math., Académie des sciences (Paris), 359(9) (2021), 1201--1205.

\bibitem[MU]{MiUl} P. Miska, M. Ulas, \textit{On Consecutive 1’s in Continued Fractions Expansions of Square Roots of Prime Numbers}, Exp. Math., 31(1) (2022), 238--251.

\bibitem[Pe]{Pe} O. Perron, \emph{Die Lehre von den Kettenbr\" uchen}, Band 1, B. G. Teubner, Stuttgart, 1954.

\bibitem[Pod]{Pod} E. V. Podsypanin, \textit{Length of the period of a quadratic irrational}, J. Soviet Math. 18 (1982), 919--923.

\bibitem[Poh]{Poh} A. Pohl, \textit{An upper bound for the period length of a quadratic irrational}, Abh. Math. Sem. Univ. Hamburg 77 (2007), 129--136.

\bibitem[RS]{RS} J. B. Rosser, L. Schoenfeld, \textit{Approximate formulas for some functions of prime numbers}, Illinois J. Math. 6 (1962), 64--94.

\bibitem[RT]{RipTay} P. J. Rippon, H. Taylor, \textit{Even and odd periods in continued fractions of square roots}, Fibonacci Q., 42(2) (2004), 170--180.

\bibitem[Sa]{Sa} N. Saradha, \textit{On the length of the period of a real quadratic irrational}, Indian J. Pure Appl. Math. 48 (2017), 311--321. 

\bibitem[Ska]{Ska} M. Skałba, \textit{On the equation $a^2 + bc = n$ with
restricted unknowns}, Bull. Polish Acad. Sci. Math. 64 (2016), 137–-145.
(2016), 137--145.
\end{thebibliography}
\end{document}